%% file: main.tex
\documentclass[11pt, a4paper]{article}
\usepackage{amsmath}
\usepackage{mathtools}
\usepackage{amssymb}
\usepackage{graphicx}
\usepackage{algorithm}
\usepackage{algpseudocode}
\usepackage{amsthm}
\usepackage{hyperref}
\usepackage{cleveref}
\usepackage{graphicx}
\usepackage{geometry}
\usepackage{wrapfig}
\usepackage{booktabs}
\usepackage{siunitx}
\usepackage{multirow}

\usepackage{caption}
\usepackage{xcolor}
\usepackage{changepage}
\usepackage{placeins}

\usepackage{physics}
\usepackage{amsmath}
\usepackage{tikz}
\usepackage{mathdots}
\usepackage{yhmath}
\usepackage{cancel}
\usepackage{color}
\usepackage{siunitx}
\usepackage{array}
\usepackage{multirow}
\usepackage{amssymb}
\usepackage{gensymb}
\usepackage{tabularx}
\usepackage{extarrows}
\usepackage{booktabs}
\usetikzlibrary{fadings}
\usetikzlibrary{patterns}
\usetikzlibrary{shadows.blur}
\usetikzlibrary{shapes}
\usepackage{longtable}

\usetikzlibrary{decorations.pathmorphing,arrows.meta,calc,positioning}

\usetikzlibrary{patterns,snakes}
\usetikzlibrary{arrows.meta} 

\usepackage[square,numbers]{natbib}

\theoremstyle{plain}
\newtheorem*{theorem*}{Theorem}
\newtheorem{theorem}{Theorem}[section]

\theoremstyle{definition}

\theoremstyle{remark}
\newtheorem{remark}[theorem]{Remark}

\usepackage{makecell}
\usepackage{tabularray}

\colorlet{xcol}{blue!70!black}
\colorlet{darkblue}{blue!40!black}
\colorlet{myred}{red!65!black}
\tikzstyle{mydashed}=[xcol,dashed,line width=0.25,dash pattern=on 2.2pt off 2.2pt]
\tikzstyle{axis}=[->,thick] 
\tikzstyle{ell}=[{Latex[length=3.3,width=2.2]}-{Latex[length=3.3,width=2.2]},line width=0.3]
\tikzstyle{dx}=[-{Latex[length=3.3,width=2.2]},darkblue,line width=0.3]
\tikzstyle{ground}=[preaction={fill,top color=black!10,bottom color=black!5,shading angle=20},
                    fill,pattern=north east lines,draw=none,minimum width=0.3,minimum height=0.6]
\tikzstyle{mass}=[line width=0.6,red!30!black,fill=red!40!black!10,rounded corners=1,
                  top color=red!40!black!20,bottom color=red!40!black!10,shading angle=20]
\tikzstyle{spring}=[line width=0.8,blue!7!black!80,snake=coil,segment amplitude=5,segment length=5,line cap=round]
\tikzset{>=latex} 
\tikzstyle{force}=[->,myred,very thick,line cap=round]

\title{SIPHy: Sparse identification of port-Hamiltonian systems from noisy data}
\author{

  Håkon Noren Myhr\textsuperscript{1} \and
  Sølve Eidnes \textsuperscript{2} \and
  J. Nathan Kutz \textsuperscript{3} 
}
\date{\today}

\begin{document}

\maketitle

\footnotetext[2]{\scriptsize SINTEF Digital, Oslo, Norway.}
\footnotetext[3]{\scriptsize Autodesk Research, 6 Agar Street, London, United Kingdom.}
\footnotetext[1]{\scriptsize Norwegian University of Science and Technology, Trondheim, Norway (\href{mailto:hakon.noren@ntnu.no}{hakon.noren@ntnu.no}).}

\begin{abstract}
We propose \textit{sparse identification of port-Hamiltonian systems} (SIPHy), a method for structure-preserving symbolic regression from noisy trajectory observations. The method applies to port-Hamiltonian systems, which provide a general framework for describing dynamical systems in terms of energy exchange, dissipation and control. Our algorithm can jointly identify the Hamiltonian as well as the dissipation and input matrices. Furthermore, we introduce \textit{Hamiltonian flow splines} to better approximate derivatives of trajectory data corrupted by noise or with missing time points, a major challenge of system identification for differential equations. This method assembles flows of piecewise polynomial Hamiltonians to produce a smooth, differentiable trajectory necessary for sparse regression. Combining flow splines with SIPHy yields sparse port-Hamiltonian models from noisy and incomplete trajectory data. Since the Hamiltonian, dissipation, and input terms are identified separately, the learned model can also be evaluated after modifying the control input or dissipation, which we demonstrate on a controlled mass–spring system.
\end{abstract}

\begin{figure}[!t]
\centering
\begin{tikzpicture}
\node(img2) at (0,0){\includegraphics[width=0.98\textwidth]{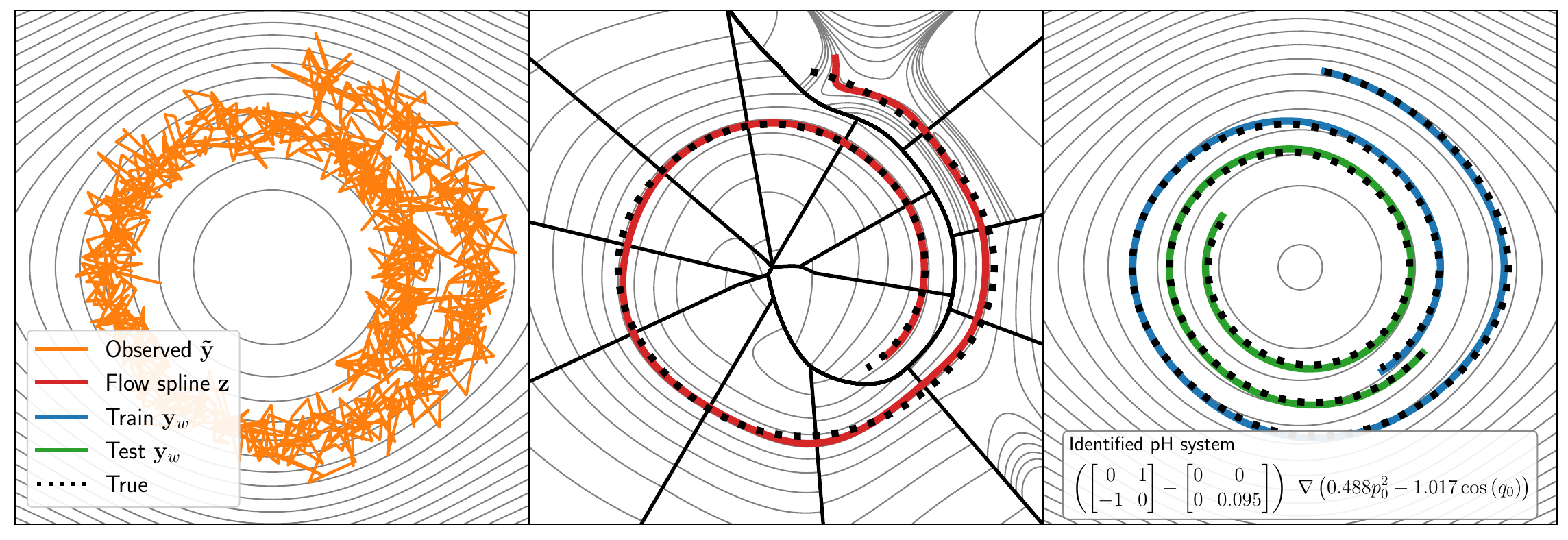}};
\node(img1) at (0,4.75) {\includegraphics[width=0.98\textwidth]{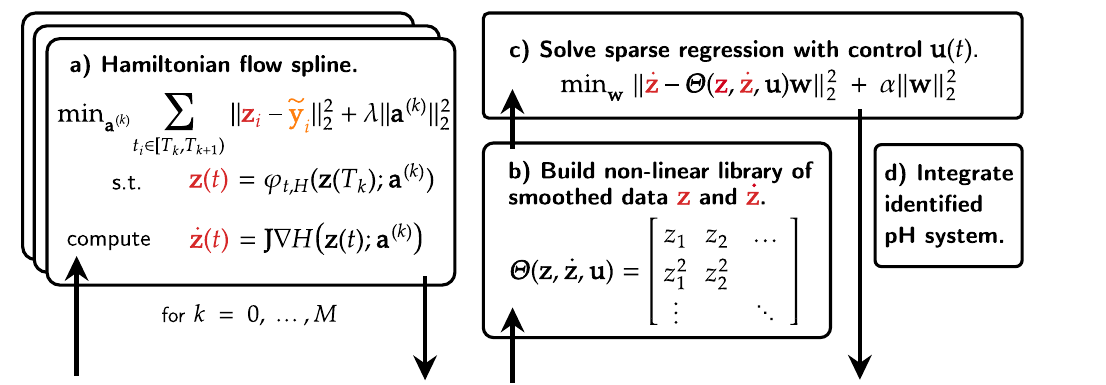}};
\end{tikzpicture}\vspace{-1em}
        \caption{\small \textbf{System identification with Hamiltonian flow splines and SIPHy.} \textbf{(a)} Fit a Hamiltonian flow spline $\mathbf{z}(t)$ (lower, middle) to corrupted trajectory data $\tilde{\mathbf{y}}(t_i)$ from the damped pendulum (lower left). \textbf{(b)} Build a nonlinear feature library $\boldsymbol{\Theta}$ from the preprocessed trajectory and control $\mathbf{u}(t)$. \textbf{(c)} Solve sparse regression to identify the port-Hamiltonian (pH) parameters $\mathbf{w}$. \textbf{(d)} Integrate the identified pH system (lower right) from the initial value corresponding to the observed trajectory ($\mathbf{y}_w$ train) and a new initial value ($\mathbf{y}_w$ test). The level sets show energy levels for the true and learned Hamiltonians.}
        \label{fig:overview}
    \end{figure}

\section{Introduction}

Physics-informed machine learning aims to combine data-driven methods with prior physical knowledge about the underlying dynamics to be modelled. A class of methods within this framework allows models to be \textit{hard-constrained} to inherit physical principles or geometric properties such as energy preservation, symmetries or conservation laws of the dynamical system \cite{greydanusHamiltonianNeuralNetworks2019,offen2024learning,NEURIPS2021_8b519f19}. Even though these approaches enable accurate forecasts of dynamical systems, many rely on neural networks, which often provide little or no interpretability of the discovered systems. Symbolic regression \cite{brunton_discovering_2016,petersen2019deep,cranmer2023interpretable}, on the other hand, identifies a parsimonious mathematical expression of the dynamical system from data~\cite{kutz2022parsimony}. Such algorithms provide forecasts as well as an interpretable representation, facilitating scientific discovery. However, few symbolic regression algorithms allow physics constraints to be enforced.  While the Hamiltonian formalism describes dynamical systems in terms of energy conservation, port-Hamiltonian systems \cite{schaftPortHamiltonianSystemsTheory2014} are a generalized framework for describing multi-physical systems through energy exchange, dissipation and interconnection. Port-Hamiltonian systems are thus naturally suited for data-driven modeling when working with a priori assumptions on the energy of the system \cite{eidnesPseudoHamiltonianNeuralNetworks2023a,rothStablePortHamiltonianNeural2025}. Identifying the dynamics in port-Hamiltonian form separates the Hamiltonian, dissipation, and input map rather than representing them through a single vector field. This decomposition gives the learned coefficients a direct physical interpretation and makes it possible to modify individual components. This makes it possible, for example, to change the dissipation or applied input when evaluating the identified model.

We introduce two methods. First, SIPHy exploits the assumed port-Hamiltonian structure to cast joint identification of the Hamiltonian, dissipation, and input map as a linear sparse-regression problem, unlike previous approaches \cite{leeStructurepreservingSparseIdentification2022}. Second, Hamiltonian flow splines provide smooth trajectory and derivative estimates from noisy or incomplete observations. We evaluate the combined approach on several synthetic systems and on experimental trajectory data. An illustration of the proposed framework is presented in \cref{fig:overview}, and \cref{tab:hfs_vs_siphy} provides an overview of the two proposed algorithms.

\begin{table}[t]
  \caption{Comparison of Hamiltonian flow splines and SIPHy. $\dot{\mathbf{Q}} := \operatorname{diag}(\dot{\mathbf{q}})$ and $\mathbf{U} := \operatorname{diag}(\mathbf{u})$.}
\label{tab:hfs_vs_siphy}
\centering
\small
\renewcommand{\arraystretch}{1.1}
\resizebox{\textwidth}{!}{
\begin{tabular}{
    >{\raggedright\arraybackslash}p{0.15\linewidth} 
    >{\raggedright\arraybackslash}p{0.38\linewidth}
    >{\raggedright\arraybackslash}p{0.34\linewidth}
}
\toprule
& \textbf{Hamiltonian flow splines} & \textbf{SIPHy} \\
\midrule

\textbf{Purpose}
& Differentiation of noisy data
& System identification \\

\textbf{Structure}
& Piecewise Hamiltonian
& Port-Hamiltonian \\

\textbf{Optimization}
& Constrained nonlinear
& Sequentially thresholded least squares \\

\textbf{Notation}
& Trajectory: $\mathbf{z}(t)$\newline
  Coefficients: $\mathbf{a}^{(k)}$\newline
  Hamiltonian: $H(\mathbf{z};\ \mathbf{a}) = \boldsymbol{\Lambda}(\mathbf{z})^\top \mathbf{a}$
& Trajectory: $\mathbf{y}_w(t)$\newline
  Coefficients: $\mathbf{w}=[\mathbf{c} \; \mathbf{r} \; \mathbf{g}]$ \newline
  Hamiltonian: $H(\mathbf{y}) = \boldsymbol{\Xi}(\mathbf{y})^\top \mathbf{c}$
  \\

\textbf{Coefficients}
& Satisfy continuity and coercivity
& Sparse \\

\textbf{Time domain}
& Local: $\mathbf{a}(t) = \mathbf{a}^{(k)}$ for $t \in [T_k,T_{k+1})$
& Global: $\mathbf{w}(t) = \mathbf{w}$ for $t\in [0,T]$ \\

\textbf{Basis for $H( \cdot  )$}
& $\boldsymbol{\Lambda}(\mathbf{z})=[z_1,\; z_1z_2,\; \dots,\; z_1^4,\; z_2^4]$
& $\boldsymbol{\Xi}(\mathbf{y})=[y_1,\; y_1y_2,\; \dots,\; \sin(y_1)]$ \\

\textbf{Dynamics}
& \makecell[l]{$\displaystyle
\begin{bmatrix}
\dot{\mathbf{q}}\\[2pt]
\dot{\mathbf{p}}
\end{bmatrix}
=
\begin{bmatrix}
\nabla_p \boldsymbol{\Lambda}(\mathbf{z})\\[2pt]
-\nabla_q \boldsymbol{\Lambda}(\mathbf{z})
\end{bmatrix}
\mathbf{a}^{(k)},
$}
& \makecell[l]{$\displaystyle
\begin{bmatrix}
\dot{\mathbf{q}}\\[2pt]
\dot{\mathbf{p}}
\end{bmatrix}
\!=\!
\begin{bmatrix}
\nabla_p \boldsymbol{\Xi}(\mathbf{q},\mathbf{p}) &\! 0 &\! 0\\[2pt]
-\nabla_q \boldsymbol{\Xi}(\mathbf{q},\mathbf{p}) &\! -\dot{\mathbf{Q}} &\! \mathbf{U}
\end{bmatrix}
\!\!\begin{bmatrix}
\mathbf{c}\\[2pt]
\mathbf{r}\\[2pt]
\mathbf{g}
\end{bmatrix}
$} \\

\bottomrule
\end{tabular}}
\vspace{-1em}
\end{table}

\section{Related work}

Recent algorithms for system identification embed geometric priors into regression. In \cite{leeStructurepreservingSparseIdentification2022}, SINDy is unified with neural ODEs to learn Poisson and metriplectic brackets; \cite{chuDiscoveringInterpretableDynamics2020} and \cite{otto2025unified} provides a framework for enforcing symmetry or learning symmetry in dynamic models. The work in \cite{holmsenPseudoHamiltonianSystemIdentification2024} uses gradient-based optimization to identify analytic pseudo-Hamiltonian models whose internal dynamics are separated from damping and external forces, while \cite{dipietroSparseSymplecticallyIntegrated2020} assumes separability of the Hamiltonian and leverages explicit symplectic integrators.

Noise-robust approaches integrate denoising into system identification. In \cite{cortiellaPrioriDenoisingStrategies2022}, the authors compare local and global smoothing algorithms as well as different methods for hyperparameter search.  Denoising and sparse recovery via constrained inverse problems are considered in \cite{hokansonSimultaneousIdentificationDenoising2023},  \cite{sunPhysicsinformedSplineLearning2021} leverages physics-informed spline interpolation, whereas \cite{kahemanAutomaticDifferentiationSimultaneously2022} simultaneously identifies the dynamical system while modeling the noise distribution. An alternative to explicit derivative estimation is to formulate system identification in weak form. Weak SINDy (WSINDy) replaces point wise derivatives by integral relations against test functions, substantially improving robustness to measurement noise \cite{messenger2021weak}. Weak-form sparse identification has also been applied to Hamiltonian systems \cite{messengerCoarseGrainingHamiltonianSystems2023}.

Port-Hamiltonian systems represent dynamics in terms of stored energy, dissipation, and power exchange with the environment \cite{schaftPortHamiltonianSystemsTheory2014}. Their explicit energy-balance structure makes them useful in passivity-based modeling and control \cite{ortegaInterconnectionDampingAssignment2002,schaftPortHamiltonianSystemsTheory2014}. Port-Hamiltonian formulations have been developed for a range of physical systems, including electrical, mechanical, and fluid systems \cite{escobarHamiltonianViewpointModeling1999,rashadPortHamiltonianModelingIdeal2021,forniPortHamiltonianFormulationRigidBody2015a}. More recently, several works learn sparse or adaptive port-Hamiltonian systems directly from measured trajectories while preserving the underlying energetic constraints \cite{rettbergDatadrivenIdentificationLatent2024,junkerAdaptiveDataDrivenModels2025}.

\section{Sparse identification of port-Hamiltonian systems (SIPHy)}

The central observation enabling SIPHy is that, under structural assumptions on the port-Hamiltonian system — namely that dissipation and control only act on the momentum coordinates, and therefore enter only in the equation for $\dot{\mathbf{p}}$ — the joint identification of the Hamiltonian function, the dissipation matrix, and the input map reduces to a single linear regression problem in the unknown coefficients. This is in contrast to prior structure-preserving identification approaches \cite{leeStructurepreservingSparseIdentification2022, holmsenPseudoHamiltonianSystemIdentification2024}, which require nonlinear optimization. The linearity makes sequentially thresholded least squares directly applicable and yields sparse, interpretable models without iterative gradient methods.

\subsection{Port-Hamiltonian systems}

We consider port-Hamiltonian systems of the form 

\begin{align}
\dot{\mathbf{y}}(t) &= \left( \begin{bmatrix}
0 & \mathbf{I}\\
-\mathbf{I} & 0
\end{bmatrix}
- \begin{bmatrix}
0 & 0\\
0 & \mathbf{R}
\end{bmatrix} \right) \nabla H(\mathbf{y}(t)) + \begin{bmatrix}
0 \\
\mathbf{G}
\end{bmatrix}\mathbf{u}(t),
\label{eq:port_hamiltonian}
\end{align}
where $\mathbf{y} \in \mathbb R^n$ is the system state, $\mathbf{u}(t) \in \mathbb R^d$ is a control input, and $n=2d$. For simplicity, assume that $\mathbf{R} = \mathrm{diag} \{r_1,r_2,\dots,r_d\}$ and $\mathbf{G} = \mathrm{diag}\{g_1,g_2,\dots,g_d\}$ are diagonal matrices where $r_i \geq 0$. The system is conservative for $r_i = 0$ and dissipative for $r_i > 0$ with $i = 1,2,\dots,d$. Let $\mathbf{y}(t)$ be the solution and $\tilde{\mathbf{y}}_i = \mathbf{y}(t_i) + \boldsymbol{\delta}_i$ be noisy measurements, where $\boldsymbol{\delta}_i \sim \mathcal N(0,\sigma^2\mathbf{I})$. System identification in this setting aims at identifying the Hamiltonian function $H(\mathbf{y})$ and the matrices $\mathbf{R}$ and $\mathbf{G}$ from the noisy measurements $\tilde{\mathbf{y}}_i$.

\subsection{System identification by least squares}

Let $\boldsymbol{\Xi} : \mathbb R^n \rightarrow \mathbb R^p$ be a vector containing $p$ functions of $\mathbf{y}$, for instance a combination of polynomials and trigonometric terms 
\begin{equation}
    \boldsymbol{\Xi}(\mathbf{y}) = \begin{bmatrix}
     y_1 & y_2 & \dots & y_n & y_1^2 & \dots & \sin(y_1) & \cos(y_2) & \dots
    \end{bmatrix}^{\top} \in \mathbb R^p.
\end{equation}
We aim to find a linear combination  of these functions with weights $\mathbf{c}\in\mathbb R^p$, that can be used to approximate the Hamiltonian as
    $H_{c}(\mathbf{y}) = \boldsymbol{\Xi}(\mathbf{y})^{\top}\mathbf{c}$.
Letting $\nabla \boldsymbol{\Xi}(\mathbf{y}) \in \mathbb R^{n \times p}$ be the Jacobian, the corresponding conservative Hamiltonian system is $\dot{\mathbf{y}} = \mathbf{J} \nabla \boldsymbol{\Xi}(\mathbf{y})\mathbf{c}$.

Let $\mathbf{y} = [\mathbf{q},\mathbf{p}]^{\top}$ be a splitting of the system state, where $\mathbf{q},\mathbf{p} \in \mathbb R^d$ are the generalized position and momentum, assuming that $n = 2d$. Since the dissipation and control terms only act on $\dot{\mathbf{p}}$, we can write the system dynamics as
\begin{align*}
    \begin{bmatrix}
        \dot{\mathbf{q}} \\
        \dot{\mathbf{p}}
        \end{bmatrix}
        &= \left( \begin{bmatrix}
    0 & \mathbf{I}\\
    -\mathbf{I} & 0
    \end{bmatrix}
    - \begin{bmatrix}
    0 & 0\\
    0 & \mathbf{R}
    \end{bmatrix} \right) 
    \begin{bmatrix}
    \nabla_q H(\mathbf{q},\mathbf{p}) \\
    \nabla_p H(\mathbf{q},\mathbf{p})
    \end{bmatrix} 
    + 
    \begin{bmatrix}
    0 \\
    \mathbf{G} 
    \end{bmatrix} \mathbf{u}\\
    &= \begin{bmatrix}
        \nabla_p H(\mathbf{q},\mathbf{p}) \\
        -\nabla_q H(\mathbf{q},\mathbf{p})
    \end{bmatrix}+
    \begin{bmatrix}
        0 \\
        - \mathbf{R}\dot{\mathbf{q}} + \mathbf{G} \mathbf{u}
    \end{bmatrix}.
    \end{align*}

Introducing the coefficients $\mathbf{r},\mathbf{g} \in \mathbb R^d$, we can express system identification of equations of the form \eqref{eq:port_hamiltonian} as solving a linear system of equations. For a given point in time, we aim to find a coefficient vector $\mathbf{w} = \begin{bmatrix}\mathbf{c}^{\top} & \mathbf{r}^{\top} & \mathbf{g}^{\top}\end{bmatrix}^{\top} \in \mathbb R^{p + n}$ such that
\begin{align*}
    \begin{bmatrix}
        \dot{\mathbf{q}} \\
        \dot{\mathbf{p}}
        \end{bmatrix}
        &= \underbrace{
        \begin{bmatrix}
    \nabla_p \boldsymbol{\Xi}(\mathbf{q},\mathbf{p}) & 0 & 0\\
    -\nabla_q \boldsymbol{\Xi}(\mathbf{q},\mathbf{p}) & -\mathrm{diag}(\dot{\mathbf{q}}) &  \mathrm{diag}(\mathbf{u})
\end{bmatrix}}_{\Theta(q,p,\dot q, u)}
\begin{bmatrix}
    \mathbf{c} \\
    \mathbf{r} \\
    \mathbf{g}
\end{bmatrix}\\
&= \boldsymbol{\Theta}(\mathbf{q},\mathbf{p},\dot{\mathbf{q}}, \mathbf{u}) \mathbf{w},
\end{align*}
with $\boldsymbol{\Theta}(\mathbf{q},\mathbf{p},\dot{\mathbf{q}}, \mathbf{u}) \in \mathbb R^{n \times (p + n)}$. In terms of $\mathbf{y}$ we write $\dot{\mathbf{y}} = \boldsymbol{\Theta}(\mathbf{y},\dot{\mathbf{y}},\mathbf{u}) \mathbf{w}$ and 
for $m$ observations in time $\{\mathbf{y}_i,\dot{\mathbf{y}}_i, \mathbf{u}_i\}_{i=1}^m$, by concatenation we obtain
\begin{align*}
    \begin{bmatrix}
        \dot{\mathbf{y}}_1 \\
        \dot{\mathbf{y}}_2 \\
        \vdots \\
        \dot{\mathbf{y}}_m
        \end{bmatrix} &= \begin{bmatrix}
    \boldsymbol{\Theta}(\mathbf{y}_1,\dot{\mathbf{y}}_1,\mathbf{u}_1) \\
    \boldsymbol{\Theta}(\mathbf{y}_2,\dot{\mathbf{y}}_2,\mathbf{u}_2) \\
    \vdots \\
    \boldsymbol{\Theta}(\mathbf{y}_m,\dot{\mathbf{y}}_m,\mathbf{u}_m)
\end{bmatrix} \mathbf{w},
\end{align*}
which leads to the least squares problem
\begin{align}
\min_{\mathbf{w}\in \mathbb R^{p+n}} \| \dot{\mathbf{Y}} - \boldsymbol{\Theta}(\mathbf{Y},\dot{\mathbf{Y}}, \mathbf{U}) \mathbf{w} \|_2^2,
\label{eq:phs_sindy}
\end{align}
where $\mathbf{Y},\dot{\mathbf{Y}} \in \mathbb R^{mn}$ is the concatenated state and its derivative and the evaluations of the basis functions are given by $\boldsymbol{\Theta}(\mathbf{Y},\dot{\mathbf{Y}}, \mathbf{U}) \in \mathbb R^{mn \times (p + n)}$.

\begin{remark}[Extending $\mathbf{R}$ and $\mathbf{G}$ beyond constant diagonal]
The same linear-regression principle extends from constant diagonal \(\mathbf{R}\) to matrix-valued dissipation \(\mathbf{R}(\mathbf{z})\), provided it is parametrized linearly in a known feature library $\{\psi_l\}$,
\[
\mathbf{R}(\mathbf{x})=\sum_{\ell=1}^M \psi_\ell(\mathbf{x})\,\mathbf{R}^{(\ell)},
\qquad \mathbf{x}=(\mathbf{q},\mathbf{p},\dot{\mathbf{q}},\dot{\mathbf{p}}),
\]
where each \(\mathbf{R}^{(\ell)}\in\mathbb R^{d\times d}\) is an unknown constant-coefficient matrix. If we stack the entries of these matrices into a vector
\[
\mathbf{r}
:=
\begin{bmatrix}
\mathrm{vec}(\mathbf{R}^{(1)})^\top\;
\mathrm{vec}(\mathbf{R}^{(2)})^\top\;
\dots \;
\mathrm{vec}(\mathbf{R}^{(M)})^\top
\end{bmatrix}^\top
\in\mathbb R^{Md^2},
\]
then
\[
\mathbf{R}(\mathbf{z})\dot{\mathbf{q}}
=
\sum_{\ell=1}^M \psi_\ell(\mathbf{x})\,\mathbf{R}^{(\ell)}\dot{\mathbf{q}}
=
\Big[
\psi_1(\mathbf{x})(\dot{\mathbf{q}}^\top\otimes \mathbf{I}_d)\;\;
\psi_2(\mathbf{x})(\dot{\mathbf{q}}^\top\otimes \mathbf{I}_d)\;\;
\cdots\;\;
\psi_M(\mathbf{x})(\dot{\mathbf{q}}^\top\otimes \mathbf{I}_d)
\Big]\mathbf{r}.
\]
Hence, the dissipation term is still linear in the unknown coefficients, and the same generalization could be made for the input matrix $\mathbf{G}$.
\end{remark}

\subsection{Sparse regression via Sequential Thresholded Least Squares}

We solve \eqref{eq:phs_sindy} via Sequential Thresholded Least Squares (STLSQ) \cite{brunton_discovering_2016,de2020pysindy,zhang2019convergence}, which alternates ridge regression with hard-thresholding to produce a sparse coefficient vector. STLSQ requires a regularization parameter $\alpha \geq 0$ and a coefficient threshold $\tau > 0$, both selected via the procedure in \cref{sec:hyper_selection}. We choose a Hamiltonian basis $\boldsymbol{\Xi}$ with all unique monomials up to degree $5$ in $\mathbf{y} \in \mathbb R^n$, optionally augmented with the $2n$ trigonometric terms $\{\sin(y_i), \cos(y_i)\}_{i=1}^n$.

The following notation is used to describe the STLSQ algorithm. We define the index set 
    \(
    [P] := \{1,2,\dots,P\},
    \)
    for $P=p+n$, the number of library terms. For any matrix \( \boldsymbol{\Theta} \in \mathbb{R}^{M\times P} \) and any index set \( \mathcal I_S  \subseteq [P] \), we denote by \( \boldsymbol{\Theta}_S \in \mathbb{R}^{M\times S}\) the submatrix formed by the columns of \( \boldsymbol{\Theta} \) indexed by \( \mathcal I_S \). Similarly, for a vector \( \mathbf{w} \in \mathbb{R}^{P} \), \( \mathbf{w}_S \in \mathbb R^S \) denotes the subvector restricted to the indices in \( \mathcal I_S \). STLSQ as implemented in \cite{de2020pysindy} is given by \cref{alg:stlsq} with the following additional step to guarantee nonnegative dissipation.

Neither the least-squares problem \eqref{eq:phs_sindy} nor STLSQ constrains the sign of the dissipation coefficients, so the identified $\mathbf{R}$ may fail to be positive semi-definite under noise, in which case the identified model is not passive. Since $\mathbf{R}=\mathrm{diag}(\mathbf{r})$, we need to enforce $d$ bound constraints $\mathbf{r}\ge 0$. Writing $\mathcal I_R := \{p+1,\dots,p+d\}$ for the indices of $\mathbf{r}$ in $\mathbf{w}$, we keep the coefficients returned by \cref{alg:stlsq} whenever they satisfy $\mathbf{r}\ge 0$. If instead $r_i<0$ for some $i$, we recompute the coefficients on the converged support $\mathcal I_S$ by solving the bound-constrained problem
\begin{equation}
\mathbf{w}_S^{\star} \;=\;
\arg\min_{\mathbf{w}_S \,:\; w_i \ge 0 \;\forall i \in \mathcal I_S \cap \mathcal I_R}
\;\; \bigl\| \dot{\mathbf{Y}} - \boldsymbol{\Theta}_S\,\mathbf{w}_S \bigr\|_2^2
\;+\; \alpha\,\|\mathbf{w}_S\|_2^2 ,
\label{eq:refit}
\end{equation}
a convex quadratic program in $|\mathcal I_S|$ variables with at most $d$ active bounds. This guarantees $\mathbf{R}\succeq 0$ for the returned model. The refit is applied only to the converged support and only when the unconstrained solution is negative, so it leaves both the support selection of \cref{alg:stlsq} and any nonnegative solution unchanged.

\begin{algorithm}[!htb]
    \caption{Sequential Thresholded Least Squares (STLSQ) with nonnegative refit}
    \label{alg:stlsq}
    \begin{algorithmic}[1]
    \Require Feature matrix \(\boldsymbol{\Theta} \in \mathbb{R}^{M\times P}\), derivative data \(\dot{\mathbf{Y}} \in \mathbb{R}^{M}\), regularization parameter \(\alpha\ge0\), threshold \(\tau>0\), maximum iterations \(K_{\max}\).
    \State \textbf{Initialization:} Compute
    $
    \mathbf{w}^{(0)} \gets \arg\min_{\mathbf{w}} \left\{ \|\dot{\mathbf{Y}} - \boldsymbol{\Theta}\,\mathbf{w}\|_2^2 + \alpha\,\|\mathbf{w}\|_2^2 \right\}.
    $
    \For{$k=0,1,\dots,K_{\max}$}
        \State
        $\mathcal I_S \gets \{ i \; \mathrm{s.t.}\; |w^{(k)}_i| \ge \tau \}.$
        \Comment{Update support set.}
        \State
        \(
        \mathbf{w}^{(k+1)} \gets \arg\min_{\mathbf{w}_S} \|\dot{\mathbf{Y}} - \boldsymbol{\Theta}_S\,\mathbf{w}_S\|_2^2 + \alpha\,\|\mathbf{w}_S\|_2^2,
        \) \Comment{Ridge regression.}
        \State $\mathcal I_S^{(k+1)} \gets \mathcal I_S$.
        \If{$\mathcal I_S^{(k+1)} = \mathcal I_S^{(k)}$}
            \State \textbf{break}
        \EndIf
    \EndFor
    \If{$w^{(k+1)}_i < 0$ for some $i \in \mathcal I_S^{(k+1)} \cap \mathcal I_R$}
        \State
        \(
        \mathbf{w}^{(k+1)} \gets \arg\min_{\mathbf{w}_{S}\,:\; w_i \ge 0\; \forall i \in \mathcal I_S \cap \mathcal I_R} \|\dot{\mathbf{Y}} - \boldsymbol{\Theta}_{S}\,\mathbf{w}_{S}\|_2^2 + \alpha\,\|\mathbf{w}_{S}\|_2^2,
        \) \Comment{Refit.}
    \EndIf
    \State \Return \(\mathbf{w}^{(k+1)}\)
    \end{algorithmic}
    \end{algorithm}

\section{Hamiltonian flow splines}\label{sec:energy_splines}

We now introduce Hamiltonian flow splines: a structure-preserving smoother for trajectory data from port-Hamiltonian systems. While used as a preprocessing step for SIPHy in this paper, flow splines are an independent contribution that can be paired with any downstream identification method requiring smooth derivatives. A Hamiltonian flow spline is a trajectory $\mathbf{z}(t) : [0,T] \rightarrow \mathbb R^n$ obtained from solving a piecewise polynomial Hamiltonian system. Here, we derive linear conditions on the polynomial coefficients ensuring $\mathcal C^3$ continuity and bounded trajectories through coercive Hamiltonians. We first define the piecewise polynomial Hamiltonians, and then linear conditions for continuity and coercivity. The optimization problem to fit the flow splines to data is described in \cref{sec:optimization}.

\subsection{Piecewise polynomial Hamiltonians}

The Hamiltonians used for the flow splines are constructed by taking linear combinations of monomials of $\mathbf{z} \in \mathbb R^{n}$ with coefficients $a_i \in \mathbb R$:
\[
H(\mathbf{z}) = a_1 z_1 + a_2 z_1 z_2 + a_3 z_1^2 + \cdots
\]
To enforce bounded trajectories (described in detail in \cref{sec:coercivity}), we restrict the set of highest-order monomials to be even and contain only non-mixed powers $\{z_1^p,z_2^p,\dots,z_{n}^p\}$ with order $p=2l$ for any integer $l\geq 1$. Define the multi-index sets 
\[
\mathcal A_{p-1}:=\{\boldsymbol{\alpha}\in\mathbb N^n:\ 0<|\boldsymbol{\alpha}|\le p-1\},
\qquad
\mathcal E_p:=\{p \mathbf{e}_1,\dots,p \mathbf{e}_{n}\},
\]
where \(\mathbf{e}_i\in\mathbb R^n\) denotes the \(i\)-th canonical basis vector and $|\boldsymbol{\alpha}| = \sum_i \alpha_i$. For example $\boldsymbol{\alpha} = (i,j)$ gives $\mathbf{z}^{\boldsymbol{\alpha}} = z_1^iz_2^j$. We then use the restricted index set
\begin{equation}
    \mathcal R_p:=\mathcal A_{p-1}\cup \mathcal E_p,
    \label{eq:index_set}
\end{equation}
and define the monomial basis as
\[
\boldsymbol{\Lambda}(\mathbf{z}):=\big[\mathbf{z}^{\boldsymbol{\alpha}}\big]_{\alpha \in \mathcal R_p} : \mathbb R^n \rightarrow \mathbb R^{N_\Lambda},
\qquad
N_\Lambda=\binom{n+p-1}{p-1} + n -1.
\]
For a coefficient vector \(\mathbf{a}\in\mathbb{R}^{N_\Lambda}\), we define the parametrized Hamiltonian
\[
H(\mathbf{z};\,\mathbf{a}):=\boldsymbol{\Lambda}(\mathbf{z})^\top \mathbf{a} .
\]
Consider now a piecewise-constant coefficient
function \(\mathbf{a}(t)\) over $K$ segments in time of the form
\[
\mathbf{a}(t)=\mathbf{a}^{(k)},\qquad \text{for } t\in[T_k,T_{k+1}),\quad k=0,\dots,K-1,
\]
with \(\mathbf{a}^{(k)}\in\mathbb{R}^{N_\Lambda}\) and where  $0<T_1< \dots < T_K = T$ are \textit{knots} defining the length of the segments.
The Hamiltonian flow spline \(\mathbf{z}(t)\) is then defined as the solution of the time-varying Hamiltonian system
\begin{equation}
\dot{\mathbf{z}}(t)=\mathbf{J}\,\nabla_z H\!\left(\mathbf{z}(t);\,\mathbf{a}(t)\right),\qquad \mathbf{z}(0)=\mathbf{z}_0.
\label{eq:hamiltonian_sys_at}
\end{equation}
 Global \(\mathcal C^3\) regularity across knots is enforced by the continuity conditions in the following section, while coercivity guaranteeing bounded trajectories is discussed in \cref{sec:coercivity}.

\subsection{Continuity}\label{subsec:continuity}

Because \(H(\mathbf{z};\mathbf{a})\) is linear in \(\mathbf{a}\), the derivatives of the basis that determine the regularity of \(\mathbf{z}(t)\) across a knot are linear in \(\mathbf{a}\) as well. Fix a knot \(T_k\), write \(\mathbf{z}_k:=\mathbf{z}(T_k)\), and denote by
\[\dot{\mathbf{z}}_k^-  := \lim_{h \rightarrow 0} \frac{\mathbf{z}(T_k)- \mathbf{z}(T_k - h)}{h}, \quad   \dot{\mathbf{z}}_k^+ := \lim_{h \rightarrow 0} \frac{\mathbf{z}(T_k + h) - \mathbf{z}(T_k)}{h}\]
the one-sided first derivatives at \(T_k\) and similarly for the second derivatives, replacing $\mathbf{z}$ by $\dot{\mathbf{z}}$ on the right-hand sides. Define the \(n\times N_\Lambda\) matrices
\begin{align*}
\boldsymbol{\Lambda}_k^1 &:= \nabla_z \boldsymbol{\Lambda}(\mathbf{z}_k), \\
\boldsymbol{\Lambda}_k^2 &:= \nabla_z^2 \boldsymbol{\Lambda}(\mathbf{z}_k)\!\left(\dot{\mathbf{z}}_k^-,\;\cdot\;\right), \\
\boldsymbol{\Lambda}_k^3 &:= \nabla_z^3 \boldsymbol{\Lambda}(\mathbf{z}_k)\!\left(\dot{\mathbf{z}}_k^-,\dot{\mathbf{z}}_k^-,\;\cdot\;\right)
\;+\; \nabla_z^2 \boldsymbol{\Lambda}(\mathbf{z}_k)\!\left(\ddot{\mathbf{z}}_k^-,\;\cdot\;\right),
\end{align*}
where $\nabla_z^k \boldsymbol{\Lambda} : \mathbb R^n \times \cdots \times  \mathbb R^n \rightarrow \mathbb R^{N_\Lambda}$ are multilinear forms, contracted to matrices in the equations above.
The continuity-constraint matrix is defined as 
\begin{equation}
\mathbf{C}_k := \begin{bmatrix}
\boldsymbol{\Lambda}_k^1 \\[2pt] \boldsymbol{\Lambda}_k^2 \\[2pt] \boldsymbol{\Lambda}_k^3
\end{bmatrix} \in \mathbb{R}^{3n\times N_\Lambda}.
\label{eq:C_k}
\end{equation}

\begin{theorem}[Linear continuity condition]\label{thm:C3}
Let \(\mathbf{a}(t)\) be piecewise constant such that \(\mathbf{a}(t)=\mathbf{a}^{(k)}\) on \([T_k,T_{k+1})\), and let
\(\mathbf{z}:[0,T]\to\mathbb R^n\) be continuous and satisfying
\eqref{eq:hamiltonian_sys_at} on each subinterval \((T_k,T_{k+1})\).
Let \(\mathbf{C}_k\) be given by \eqref{eq:C_k}. If, for every knot \(T_k\),
\begin{equation}
\mathbf{C}_k\big(\mathbf{a}^{(k+1)}-\mathbf{a}^{(k)}\big)=0,
\label{eq:constraints_C}
\end{equation}
then \(\mathbf{z}\in \mathcal C^3([0,T])\).
\end{theorem}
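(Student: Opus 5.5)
The plan is to work knot by knot and show that the one-sided derivatives of $\mathbf{z}$ of orders one, two and three agree at each $T_k$. On each open segment $(T_k,T_{k+1})$ the vector field $\mathbf{J}\nabla_z H(\cdot;\mathbf{a}^{(k)})$ is polynomial, hence smooth. So $\mathbf{z}$ is $\mathcal C^\infty$ there, and the one-sided limits of $\dot{\mathbf{z}},\ddot{\mathbf{z}},\dddot{\mathbf{z}}$ at the endpoints exist: the solution extends smoothly up to the closed segment because $\mathbf{z}$ is continuous and bounded on it. It then suffices to show that the left and right limits coincide at every interior knot. A function that is continuous, $\mathcal C^3$ on each piece, and has matching one-sided derivatives up to order three at the knots is $\mathcal C^3$ on $[0,T]$; this follows by the mean value theorem applied successively to $\mathbf{z},\dot{\mathbf{z}},\ddot{\mathbf{z}}$.

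Next, write the one-sided derivatives explicitly by differentiating the ODE along each side, using linearity of $H$ in $\mathbf{a}$. On the side with coefficients $\mathbf{a}$ we have $\dot{\mathbf{z}}=\mathbf{J}\nabla_z\boldsymbol{\Lambda}(\mathbf{z})\mathbf{a}$. Differentiating once gives
\[
\ddot{\mathbf{z}}=\mathbf{J}\,\nabla_z^2\boldsymbol{\Lambda}(\mathbf{z})(\dot{\mathbf{z}},\cdot)\,\mathbf{a},
\]
and differentiating again gives
\[
\dddot{\mathbf{z}}=\mathbf{J}\big[\nabla_z^3\boldsymbol{\Lambda}(\mathbf{z})(\dot{\mathbf{z}},\dot{\mathbf{z}},\cdot)+\nabla_z^2\boldsymbol{\Lambda}(\mathbf{z})(\ddot{\mathbf{z}},\cdot)\big]\mathbf{a}.
\]
At $T_k$, continuity of $\mathbf{z}$ gives the common base point $\mathbf{z}_k$. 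The left limits use $\mathbf{a}^{(k)}$, and the right limits use $\mathbf{a}^{(k+1)}$ (I take the convention that the coefficient active just before the knot is $\mathbf{a}^{(k)}$; the indexing is cosmetic).

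Then proceed inductively. For the first derivative, $\dot{\mathbf{z}}_k^+-\dot{\mathbf{z}}_k^-=\mathbf{J}\boldsymbol{\Lambda}_k^1(\mathbf{a}^{(k+1)}-\mathbf{a}^{(k)})=0$ by the first block row of \eqref{eq:constraints_C}. With $\dot{\mathbf{z}}_k^+=\dot{\mathbf{z}}_k^-$ established, both one-sided second derivatives are given by the same bilinear contraction $\nabla_z^2\boldsymbol{\Lambda}(\mathbf{z}_k)(\dot{\mathbf{z}}_k^-,\cdot)=\boldsymbol{\Lambda}_k^2$ applied to the two coefficient vectors. Their difference is therefore $\mathbf{J}\boldsymbol{\Lambda}_k^2(\mathbf{a}^{(k+1)}-\mathbf{a}^{(k)})=0$ by the second block row. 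With $\dot{\mathbf{z}}$ and $\ddot{\mathbf{z}}$ now matched, the bracket in the third-derivative formula evaluates to $\boldsymbol{\Lambda}_k^3$ on both sides, and the third block row gives equality of the third derivatives.

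The main obstacle is this inductive bookkeeping. The matrices $\boldsymbol{\Lambda}_k^2,\boldsymbol{\Lambda}_k^3$ are defined with left-sided quantities $\dot{\mathbf{z}}_k^-,\ddot{\mathbf{z}}_k^-$. To evaluate the right-sided derivatives with the same matrix, the lower-order matching must already be in place before each higher-order step. A second, minor point is justifying that the one-sided limits of the derivatives exist and are given by the formulas above at the endpoints. This follows from smoothness of the polynomial vector field and continuity of $\mathbf{z}$ up to the knot, so that $\dot{\mathbf{z}}(t)\to\mathbf{J}\nabla_z H(\mathbf{z}_k;\mathbf{a})$, and similarly for the higher derivatives. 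Invertibility of $\mathbf{J}$ is not needed, since the constraint already annihilates the vector inside.
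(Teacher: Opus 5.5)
Your proposal is correct and follows the paper's proof essentially step by step. Like the paper, you write the one-sided derivatives at a knot through the formulas for $\dot{\mathbf z},\ddot{\mathbf z},\dddot{\mathbf z}$, which are linear in $\mathbf a$, and apply the three block rows of $\mathbf C_k$ in order, each time using the lower-order matching already established (this is exactly how the paper replaces $\dot{\mathbf z}_k^-$ and $\ddot{\mathbf z}_k^-$ by common values $\dot{\mathbf z}_k,\ddot{\mathbf z}_k$). Your extra justifications fill in details the paper leaves implicit: existence of the one-sided limits via continuity of $\mathbf z$ up to the knot, the mean-value-theorem gluing into a global $\mathcal C^3$ function, and the index convention with $\mathbf a^{(k)}$ active to the left of the knot, which is the same reading the paper's proof adopts through $\mathbf a^-:=\mathbf a^{(k)}$, $\mathbf a^+:=\mathbf a^{(k+1)}$.
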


\begin{proof}
On each open interval \((T_{k},T_{k+1})\), $\mathbf{a}(t)$ is constant, so \(\nabla_z H(\mathbf{z}\; ;\mathbf{a}^{(k)})\) is \(\mathcal C^2\) and the solution $\mathbf{z}(t)$ is thus \(\mathcal C^3\) on \((T_{k},T_{k+1})\).
Define
\(
\mathbf{a}^-:=\mathbf{a}^{(k)}\) and \(\mathbf{a}^+:=\mathbf{a}^{(k+1)}
\).
The one-sided first derivatives are
\[
\dot{\mathbf{z}}_k^\pm = \mathbf{J}\nabla_z\boldsymbol{\Lambda}(\mathbf{z}_k)\,\mathbf{a}^\pm.
\]
From \eqref{eq:constraints_C} we have that
 \[
\nabla_z\boldsymbol{\Lambda}(\mathbf{z}_k)(\mathbf{a}^+-\mathbf{a}^-)=0,
\]
and thus \(\dot{\mathbf{z}}_k^-=\dot{\mathbf{z}}_k^+\), which we denote by \(\dot{\mathbf{z}}_k\). Similarly, since $\ddot{\mathbf{z}}_k^{\pm} = \mathbf{J}\nabla_z^2\boldsymbol{\Lambda}(\mathbf{z}_k)(\dot{\mathbf{z}}_k,\mathbf{a}^{\pm})$ then \[
\nabla_z^2\boldsymbol{\Lambda}(\mathbf{z}_k)\!\left(\dot{\mathbf{z}}_k,\;\mathbf{a}^+-\mathbf{a}^-\right)=0,
\]
which implies \(\ddot{\mathbf{z}}_k^-=\ddot{\mathbf{z}}_k^+\) denoted by \(\ddot{\mathbf{z}}_k\). Finally,
\[
\dddot{\mathbf{z}}_k^\pm =
\mathbf{J}\nabla_z^3\boldsymbol{\Lambda}(\mathbf{z}_k)\!\left(\dot{\mathbf{z}}_k,\dot{\mathbf{z}}_k,\mathbf{a}^\pm\right)
+\mathbf{J}\nabla_z^2\boldsymbol{\Lambda}(\mathbf{z}_k)\!\left(\ddot{\mathbf{z}}_k,\mathbf{a}^\pm\right),
\]
and the last $n$ elements of \eqref{eq:constraints_C} means that \(\dddot{\mathbf{z}}_k^-=\dddot{\mathbf{z}}_k^+\).
\end{proof}

\begin{remark}[Parameterization of admissible coefficient updates]
If \(\mathbf{C}_k\) has full row rank \(3n\) and \(N_\Lambda>3n\), then the admissible differences \(\mathbf{a}^{(k+1)}-\mathbf{a}^{(k)}\) lie in \(\mathrm{null}(\mathbf{C}_k)\).
Let \(\mathbf{U}\boldsymbol{\Sigma}\mathbf{V}^\top=\mathbf{C}_k\) be a singular value decomposition and let \(\tilde{\mathbf{V}}_k\in\mathbb{R}^{N_\Lambda\times (N_\Lambda-3n)}\) collect the last \(N_\Lambda-3n\) columns of \(\mathbf{V}\).
Then any coefficient update preserving \(\mathcal C^3\) can be written as
\[
\mathbf{a}^{(k+1)} \;=\; \mathbf{a}^{(k)} \;+\; \tilde{\mathbf{V}}_k\,\tilde{\mathbf{a}}^{(k)},\qquad \tilde{\mathbf{a}}^{(k)} \in\mathbb{R}^{N_\Lambda-3n}.
\]
\end{remark}

\subsection{Coercivity of the Hamiltonian}\label{sec:coercivity}

A practical pitfall of polynomial-based smoothers is unbounded extrapolation: small fitting errors can lead to trajectories that diverge, particularly under noise. We address this directly through coercivity, i.e.\ the property that
\[
\lim_{\|\mathbf{z}\|\to\infty} H(\mathbf{z})=+\infty.
\]

By \cite[Prop.~11.11]{bauschke2017}, coercivity is equivalent to the boundedness of all sublevel sets. Along a Hamiltonian trajectory, the value of
\(H(\mathbf{z})\) is conserved, so the trajectory remains in a level set of \(H\). Thus, a coercive Hamiltonian implies bounded trajectories. For the restricted polynomial family used here, we show that coercivity reduces to a linear inequality on the leading coefficients, making it simple to enforce during optimization:

\begin{theorem}[Linear coercivity condition]\label{thm:coercivity}
Let \(p=2l\) and let \(H(\mathbf{z};\mathbf{a})=\boldsymbol{\Lambda}(\mathbf{z})^\top \mathbf{a}\) be built from the
restricted monomial basis given by $\mathcal R_p$ in \eqref{eq:index_set}. Assume that $L<N_{\Lambda}$ is the index such that  $a_{L+j}$ are the weights for the degree $p$ monomials, i.e. 
\[
H(\mathbf{z};\mathbf{a})= \sum_{i=1}^L a_i\mathbf{z}^{\boldsymbol{\alpha}_i} + \sum_{j=1}^{n} a_{L+j} z_j^p.
\]
If 
\begin{equation}
a_{L+j}\ge \epsilon >0, \quad j=1,\dots,n,
\label{eq:coercivity_constraint}
\end{equation}
then \(H(\mathbf{z};\mathbf{a})\) is coercive, i.e.
\[
\lim_{\|\mathbf{z}\|\to\infty} H(\mathbf{z};\mathbf{a})=+\infty.
\]
\end{theorem}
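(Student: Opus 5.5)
The plan is to show that the degree-$p$ part dominates all lower-order terms in $\|\mathbf{z}\|$. The leading part is bounded below by $\epsilon$ times a power of $\|\mathbf{z}\|$, and each lower-degree monomial is bounded above by a strictly smaller power of $\|\mathbf{z}\|$.

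\textbf{Step 1: lower bound for the leading part.} Since $p=2l$ is even, $z_j^p=|z_j|^p\ge 0$. By \eqref{eq:coercivity_constraint},
\[
\sum_{j=1}^n a_{L+j} z_j^p \ \ge\ \epsilon \sum_{j=1}^n |z_j|^p = \epsilon\|\mathbf{z}\|_p^p .
\]
By equivalence of norms on $\mathbb R^n$ (concretely $\|\mathbf{z}\|_2 \le n^{1/2-1/p}\|\mathbf{z}\|_p$ for $p\ge 2$), this gives
\[
\sum_{j=1}^n a_{L+j} z_j^p \ \ge\ \epsilon\, n^{1-p/2}\,\|\mathbf{z}\|_2^p .
\]

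\textbf{Step 2: upper bound for the remaining terms.} By construction of $\mathcal R_p$ in \eqref{eq:index_set}, every remaining multi-index $\boldsymbol{\alpha}_i$ lies in $\mathcal A_{p-1}$, so $1\le|\boldsymbol{\alpha}_i|\le p-1$. Using $|z_j|\le\|\mathbf{z}\|_2$, each monomial satisfies $|\mathbf{z}^{\boldsymbol{\alpha}_i}|\le\|\mathbf{z}\|_2^{|\boldsymbol{\alpha}_i|}$. For $\|\mathbf{z}\|_2\ge 1$ this is at most $\|\mathbf{z}\|_2^{p-1}$. With $A:=\sum_{i=1}^L|a_i|$, we get
\[
\Big|\sum_{i=1}^L a_i\mathbf{z}^{\boldsymbol{\alpha}_i}\Big| \le A\|\mathbf{z}\|_2^{p-1} \quad \text{for } \|\mathbf{z}\|_2\ge 1 .
\]

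\textbf{Step 3: combine.} For $\|\mathbf{z}\|_2\ge 1$,
\[
H(\mathbf{z};\mathbf{a}) \ \ge\ \|\mathbf{z}\|_2^{p-1}\big(\epsilon n^{1-p/2}\|\mathbf{z}\|_2 - A\big).
\]
The bracket tends to $+\infty$ as $\|\mathbf{z}\|_2\to\infty$, and $\|\mathbf{z}\|_2^{p-1}\ge 1$, so $H(\mathbf{z};\mathbf{a})\to+\infty$. Since all norms on $\mathbb R^n$ are equivalent, the limit holds for any norm, which proves the claim.

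There is no genuine obstacle. The one point that needs care is exactly what the restriction to $\mathcal R_p$ provides: the top-degree part must contain no mixed monomials. A mixed term such as $z_1z_2^{p-1}$ would not be controlled by the pure powers with only the sign condition $a_{L+j}\ge\epsilon$. Excluding such terms is what makes the sign condition sufficient and keeps the constraint linear.
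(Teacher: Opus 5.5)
Your proof is correct and follows essentially the same route as the paper: you bound the pure degree-$p$ part below by $\epsilon\|\mathbf{z}\|_p^p$, pass to a power of $\|\mathbf{z}\|$ via norm equivalence, and dominate the lower-degree monomials by a multiple of $\|\mathbf{z}\|^{p-1}$. The differences are cosmetic: you make the constants explicit ($n^{1-p/2}$ and $A=\sum_i|a_i|$) and restrict to $\|\mathbf{z}\|_2\ge 1$, where the paper instead uses the bound $C(1+\|\mathbf{z}\|^{p-1})$.
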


\begin{proof}
Since \(p\) is even,
\[
\sum_{j=1}^n a_{L+j}\, z_j^p
\ge
\epsilon \sum_{j=1}^n |z_j|^p
=
\epsilon \|\mathbf{z}\|_{p}^{p} .
\]
For the remaining polynomial
\(\sum_{i=1}^L a_i\mathbf{z}^{\boldsymbol{\alpha}_i}\),
we have $|\boldsymbol{\alpha}_i| \leq p-1$, and thus
\[
|\mathbf{z}^{\boldsymbol{\alpha}}|
=
\prod_{j=1}^n |z_j|^{[\boldsymbol{\alpha}]_j}
\le
C(1+\|\mathbf{z}\|^{p-1}),
\]
for a constant $C>0$. Therefore
\begin{align*}
H(\mathbf{z};\mathbf{a})
&\ge
\epsilon \|\mathbf{z}\|_p^{p} - C\bigl(1+\|\mathbf{z}\|^{p-1}\bigr)\\
&\ge \epsilon c_p \|\mathbf{z}\|^p - C\bigl(1+\|\mathbf{z}\|^{p-1}\bigr),
\end{align*}
with \(c_p>0\) being the constant such that \(\|\mathbf{z}\|_p^p \ge c_p \|\mathbf{z}\|^p\) by the equivalence of norms on \(\mathbb R^n\).
Hence, 
\(H(\mathbf{z};\mathbf{a})\to +\infty\)
as \(\|\mathbf{z}\|\to\infty\).
\end{proof}

\subsection{Applying Hamiltonian flow splines to data}
\label{sec:optimization}
Consider noisy measurements $\tilde{\mathbf{y}}_n = \mathbf{y}(t_n) + \boldsymbol{\delta}_n$, for $n = 1,2,\dots,N$, and let $\eta \in [0,1)$ be the rate of missing data, meaning $\eta  N$ points in time are missing. We fit a smooth trajectory $\mathbf{z}(t)$ to noisy data, from which the derivative can be evaluated. That is, we want to find coefficients $\mathbf{a}(t)$ that minimize
\begin{equation}
    \begin{aligned}
    \min_{\mathbf{a}(t)}  \sum_{n=1}^N \| \mathbf{z}(t_n) - \tilde{\mathbf{y}}_n \|_2^2 \quad
    \text{subject to} \quad \mathbf{z}(t) \text{ solving }  \eqref{eq:hamiltonian_sys_at}.
\label{eq:optimal_control}
\end{aligned}
\end{equation}

We solve the optimization problem \eqref{eq:optimal_control} by dividing the time interval $[0,T]$ into $K$ intervals $[T_k,T_{k+1})$ with $M$ points in time within each and solve for $\mathbf{a}^{(k)}$ on each interval. This is a greedy algorithm, meaning we approximate the local optimal solution for each interval, which may not be the global optimal solution. A heuristic that mitigates overfitting an individual interval is to  optimize $\mathbf{a}^{(k)}$ over an extended window $[T_{k},T_{k+r})$ with  $r\in\{1,\dots,K-k\}$, but only use the solution $\mathbf{a}^{(k)}$ in the time-interval $[T_{k},T_{k+1})$. The final window may contain fewer points because they are truncated at $T$. The hyperparameters of this algorithm are $M$, $r$, as well as $\lambda$, which is the weight for the $l_2$ regularization on $\mathbf{a}^{(k)}$.

For the first interval $[0,T_{r})$, we optimize over the initial value $\mathbf{z}_0$ \cite{chenSymplecticRecurrentNeural2019} as well as $\mathbf{a}^{(0)}$:
\begin{equation}
    \begin{aligned}
    \min_{\mathbf{z}_0,\mathbf{a}^{(0)}}  \sum_{t_n \in [0,T_{r})} &\| \mathbf{z}_n - \tilde{\mathbf{y}}_n \|_2^2 + \lambda \| \mathbf{a}^{(0)} \|_2^2
   \quad & & \text{subject to}\\[1em]
     \mathbf{z}_n  \text{ solving }  \eqref{eq:hamiltonian_sys_at} \text{ with } H&=H(\mathbf{z};\mathbf{a}^{(0)}) \text{ and } \mathbf{z}(0) = \mathbf{z}_0 & & \text{(ODE)}\\ 
    a_{L+j}^{(0)}&\ge \epsilon, \quad j=1,\dots,n & & \text{(Coercivity)}
\end{aligned}
\label{eq:optimal_iso}
\end{equation}
For the remaining intervals $k\geq1$, we take $\mathbf{z}(T_{k})$, the end point from the previous time segment, as initial value, and ensure $\mathcal C^3$ continuity by requiring $\mathbf{a}^{(k)}$ to satisfy the constraints in \eqref{eq:constraints_C}:
\begin{equation}
    \begin{aligned}
    \min_{\mathbf{a}^{(k)}}  \sum_{t_n \in [T_{k},T_{k+r})}& \| \mathbf{z}_n - \tilde{\mathbf{y}}_n \|_2^2 + \lambda \| \mathbf{a}^{(k)} \|_2^2
   \quad & & \text{subject to}\\[1em]
     \mathbf{z}_n  \text{ solving }  \eqref{eq:hamiltonian_sys_at} \text{ with }  H&=H(\mathbf{z};\mathbf{a}^{(k)}), & & \text{(ODE)}\\ 
    a_{L+j}^{(k)}&\ge \epsilon, \quad j=1,\dots,n, & & \text{(Coercivity)} \\
     \mathbf{C}_k (\mathbf{a}^{(k)} - \mathbf{a}^{(k-1)}) &= 0.  & & \text{(Continuity)}
\end{aligned}
\label{eq:optimal_control_k}
\end{equation}
If observations are missing ($\eta >0$), the time indices $t_j$ where $\tilde{\mathbf{y}}_j$ is missing are excluded from the sum in the optimization objectives in \eqref{eq:optimal_iso} and \eqref{eq:optimal_control_k}.

\section{Numerical experiments}

\begin{table}[!t]
  \caption{System-wise comparison of identification errors. Bold indicates the smaller numeric error in each row (SIPHy vs SINDy). Conservative trajectories to the left and trajectories with dissipation to the right.}
\label{tab:noise_diss}
\centering
\resizebox{\textwidth}{!}{
\input{tables/system_table_compare_0.0_0.1_final}}
\end{table}

We perform numerical experiments on port-Hamiltonian systems with and without dissipation, with varying levels of noise $\gamma \in [0,0.2]$ and rates of missing data $\eta \in [0,0.8]$. We define the perturbations such that $\gamma$ is a noise-to-signal ratio, as described in \cref{sec:noise_level}. The dissipation matrix is chosen to be $\mathbf{R} = c\mathbf{I}$ with $c \in \{0,0.1\}$, where $\mathbf{I}$ is the identity matrix. Trajectories are obtained by integrating the port-Hamiltonian systems with step size $h = 0.01$ for $t_i \in [0,20]$. All experiments identify port-Hamiltonian systems on a single (training) trajectory, as well as testing on one trajectory arising from a new initial value.

The following Hamiltonians are used in the experiments:
\begin{align*}
H(\mathbf{y}) &= \frac{1}{2}y_2^2 - \cos(y_1),
& \text{(Pendulum)}\\
H(\mathbf{y}) &= \frac{1}{4}(y_1^4 + y_2^4),
& \text{(Cubic)}\\
H(\mathbf{y}) &= \frac{1}{2}(y_1^2 + y_2^2 + y_3^2 + y_4^2)
+ y_1^2 y_2 - \frac{1}{3}y_2^3,
& \text{(Hénon--Heiles)}\\
H(\mathbf{y}) &= \frac{1}{4}(y_1 - y_2)^4 + \frac{1}{4}(y_1 + y_2)^4
+ \frac{1}{2}(y_3^2 + y_4^2 + y_2^2),
& \text{(FPUT)}
\end{align*}
in addition to a mass--spring chain described below. 

The experiments use four protocols: varying noise and missing data (\cref{fig:noise_miss}), varying noise and damping (\cref{tab:noise_diss}), while the mass--spring chain has a fixed damping of $\mathbf{R}=0.15\cdot \mathbf{I}_d$ and varying noise level with results in \cref{fig:rollouts} and \cref{tab:mass_spring}. Finally, we use the proposed framework on experimental measurements from \cite{schmidt2009distilling}.
\begin{figure}[!t]
\centering
\includegraphics[width=1\textwidth]{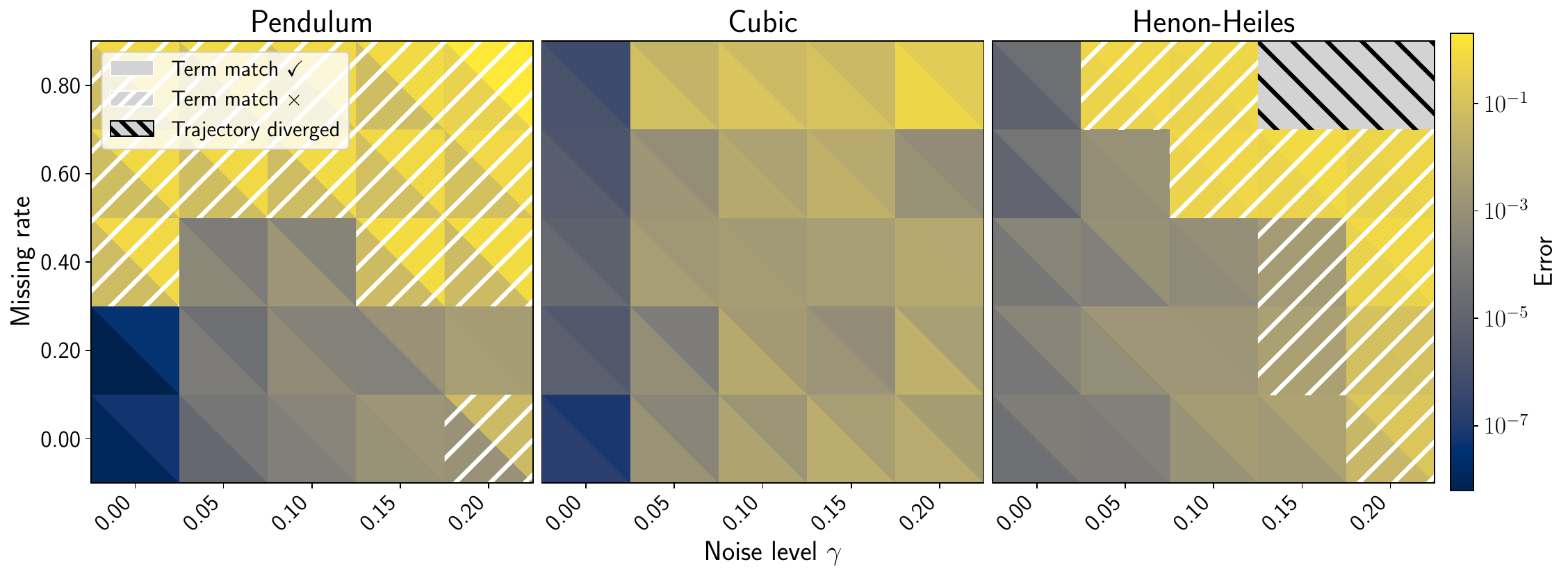}
\caption{SIPHy results showing training/testing NMSE in the lower left / upper right triangular color panels. White lines indicate that the incorrect basis was identified, and black lines indicate that the identified system diverged when integrated.}
\label{fig:noise_miss}
\end{figure}

For the experiments with zero noise presented in \cref{tab:noise_diss}, we used SIPHy and SINDy directly on the trajectory and its numerical derivative. The trajectories generated from this experiment are found in  \cref{fig:rollouts} and \cref{fig:full_plot} in \cref{sec:add_results}.

\Cref{fig:noise_miss} shows training and testing NMSE for three benchmark systems across a grid of noise levels and missing-data rates. Each cell reports two values: the lower-left triangle shows training error and the upper-right triangle shows test error. In two cases with high rates of missing data and noise, the identified system diverges when integrated numerically, as seen in \cref{fig:noise_miss}. The equations describing the identified systems could be found in \cref{tab:coefficient_table} and \cref{tab:mass_spring_identified} in \cref{sec:add_results}.

We measure the error of the approximation $\mathbf{x}_i \approx \mathbf{y}_i$ as the normalized mean squared error (NMSE)
\begin{equation*}
    e(\mathbf{x},\mathbf{y}) := \frac{1}{N \sigma_y^2 }\sum_{i=1}^N  \| \mathbf{x}_i - \mathbf{y}_i \|_2^2,
\end{equation*}
where $\sigma_y^2$ is the total variance defined in \eqref{eq:total_variance} in \cref{sec:noise_level}. The training NMSE measures the discrepancy of the numerical flow of the true and learned system from the initial value generating the training data. Test NMSE considers an unseen trajectory. The initial values used are found in \cref{tab:inits} in \cref{sec:details_num}, together with additional details of the experiments. If the terms in the identified port-Hamiltonian system match the terms of the true system, we denote this by \textit{term match}, which simply means that the basis weights $\mathbf{w}_{\mathrm{id}}$ from STLSQ satisfy $\mathrm{supp}(\mathbf{w}_{\mathrm{true}}) = \mathrm{supp}(\mathbf{w}_{\mathrm{id}})$. The coefficient error is defined as 
$\|\mathbf{w}_{\mathrm{id}}-\mathbf{w}_{\mathrm{true}}\|_2$.

\subsection{Mass--spring chain with boundary actuation}\label{subsec:msd_chain_2n}

We consider a $d$-mass--spring chain where $\mathbf{q} \in \mathbb R^{d}$ denotes the displacements of the masses and $\mathbf{p} \in \mathbb R^{d}$ the corresponding momenta. The Hamiltonian is given by
\[
H(\mathbf{q},\mathbf{p})=\tfrac12 \mathbf{p}^\top \mathbf{M}^{-1}\mathbf{p}+\tfrac12 \mathbf{q}^\top \mathbf{K}\mathbf{q},
\] 
with diagonal mass matrix $\mathbf{M} \succ 0$ and a tri-diagonal stiffness matrix $\mathbf{K}=\mathbf{K}^\top\succeq 0$ encoding neighbour coupling, where we assume that the first mass to the left is connected to a wall by a spring with stiffness $k_1$. This results in a tri-diagonal stiffness matrix with diagonal $K_{i,i} = k_i + k_{i+1}$, off-diagonal  $K_{i,i+1} = K_{i+1,i} = -k_{i+1}$ for $i = 1,\dots,d-1$, and $K_{dd} = k_d$.
\begin{wrapfigure}[7]{hr}{0.4\textwidth}
\input{mass_spring_damper_fig}
\caption{Mass-spring chain with $d=3$ masses.}
\label{fig:mass_spring_damper}
\end{wrapfigure}
\begin{figure}[!t]
\centering
\includegraphics[width=0.45\textwidth]{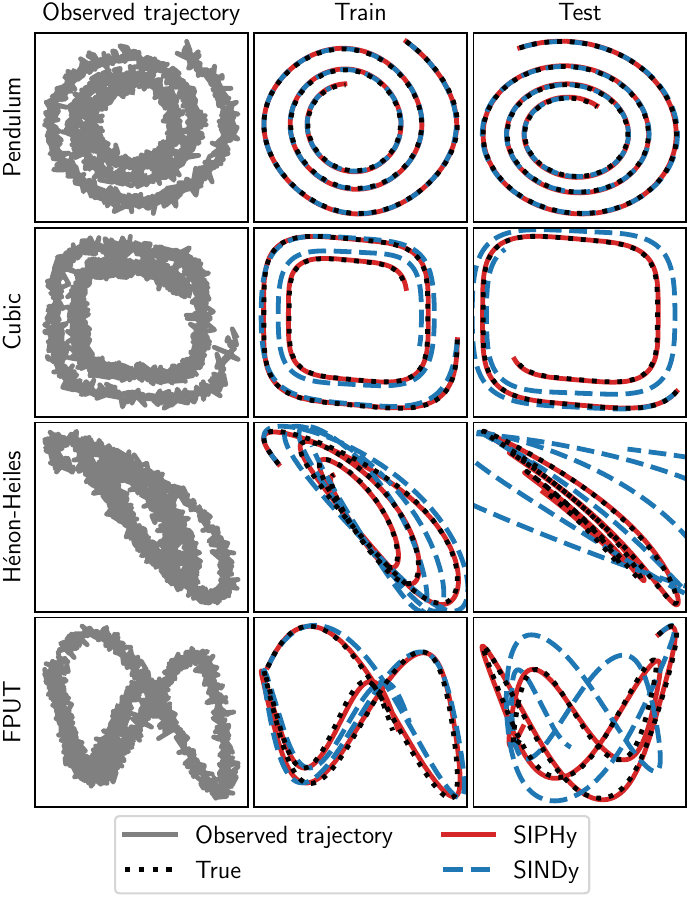}
\includegraphics[width=0.5\textwidth]{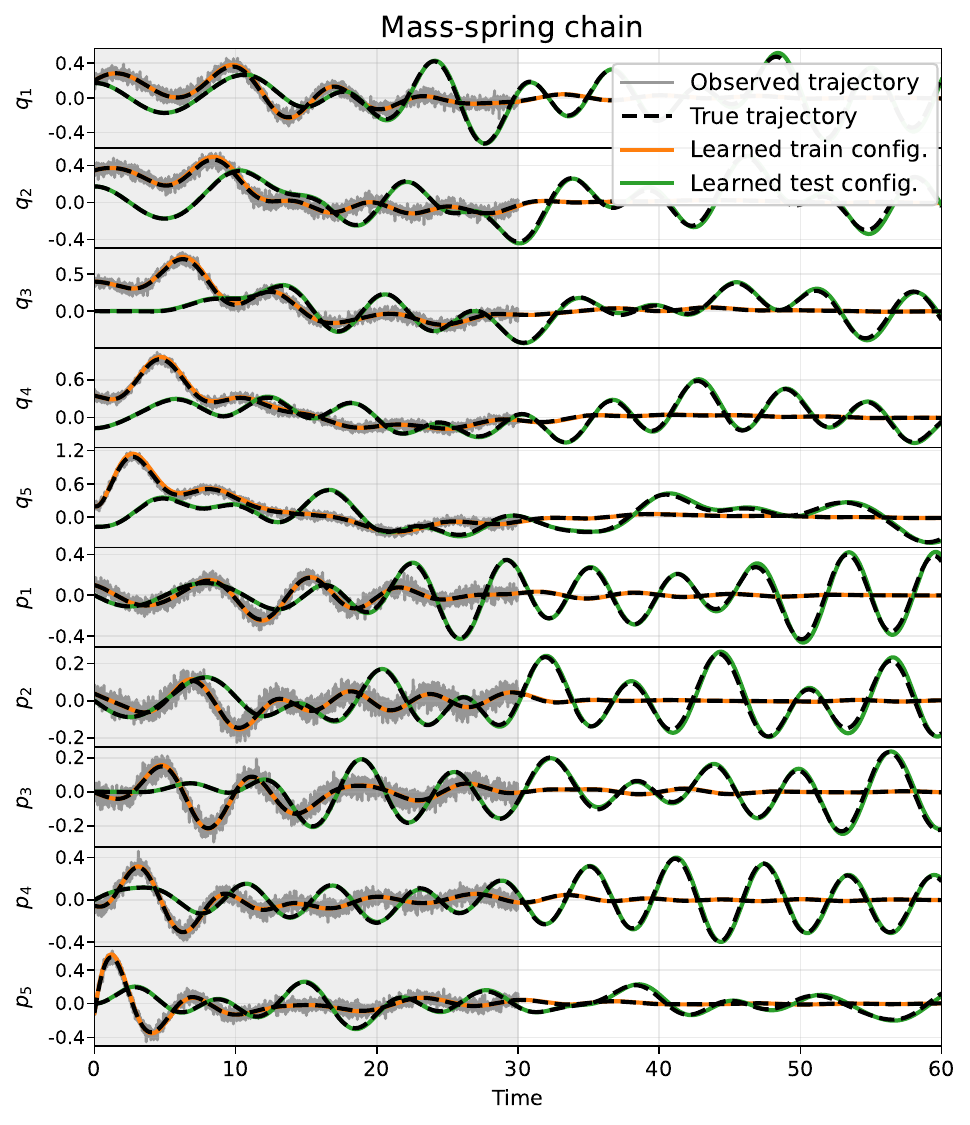}
\caption{\textbf{Left}: Observed and learned trajectories from training and test initial values $\mathbf{y}_0$ for the proposed SIPHy algorithm and SINDy. All the true port-Hamiltonian systems are dissipative with noise level $\gamma = 0.1$. \textbf{Right}: Learned trajectories from training and test configurations. The noisy trajectory ($\gamma = 0.2$) is observed in a time interval $t\in [0,30]$ as indicated by the gray background color.}
\label{fig:rollouts}
\end{figure}

The system is illustrated in \cref{fig:mass_spring_damper} and its dynamics are given by a port-Hamiltonian system of the form \eqref{eq:port_hamiltonian} with 
\(
\mathbf{R} = \mathrm{diag}(r_1,\ldots,r_d)\succeq 0,
\)
corresponding to ground friction. Setting $\mathbf{G}=\mathbf{e}_{d} = [0,\dots,0,1]^{\top}\in\mathbb{R}^{d}$, we allow for actuation of the final mass with a scalar control input $u(t)$. For the numerical experiments, we consider $d=5$ masses with $k_i = 0.4$ for $i=1,\dots,d$, $\mathbf{M}^{-1} = \mathbf{I}_d$.

We apply flow splines and SIPHy on a single trajectory in a time interval up to $T = 30$ with step size $h= 0.01$ and noise levels $\gamma \in \{0.0,0.1,0.2 \}$. The identified system is simulated with the known control (orange trajectory) before we test the identified Hamiltonian with zero dissipation and a sine control term  (green trajectory) in \cref{fig:rollouts}. Both configurations are described in \cref{tab:train-test-settings} in \cref{sec:details_num}.

\begin{table}[h]
  \caption{Mass--spring identification errors across noise levels
$\gamma$ using SIPHy.}
\label{tab:mass_spring}
\centering
\renewcommand{\arraystretch}{1.1}
\setlength{\tabcolsep}{3pt}

\begin{tabular}{lcccc}
\toprule
$\gamma$ & Train Error & Test Error & Coefficient Error & Term Match \\
\midrule
0.0 & 1.2e-04 & 1.3e-04 & 8.0e-03 & $\checkmark$ \\
0.1 & 3.3e-04 & 4.8e-04 & 2.6e-02 & $\checkmark$ \\
0.2 & 8.0e-03 & 1.1e-02 & 1.7e-01 & $\times$ \\
\bottomrule
\end{tabular}
\end{table}

\begin{figure}[!htb]
\centering

\begin{minipage}[t]{0.32\textwidth}
\vspace{0pt}
\centering
\includegraphics[width=\linewidth]
    {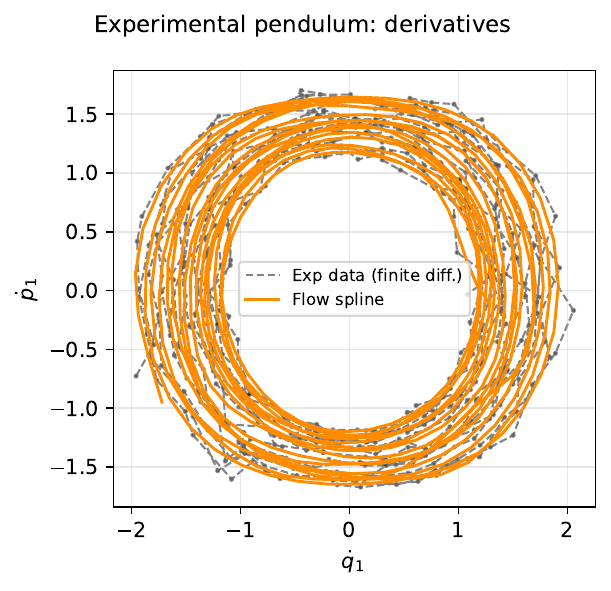}

\vspace{0.8em}

\includegraphics[width=\linewidth]
    {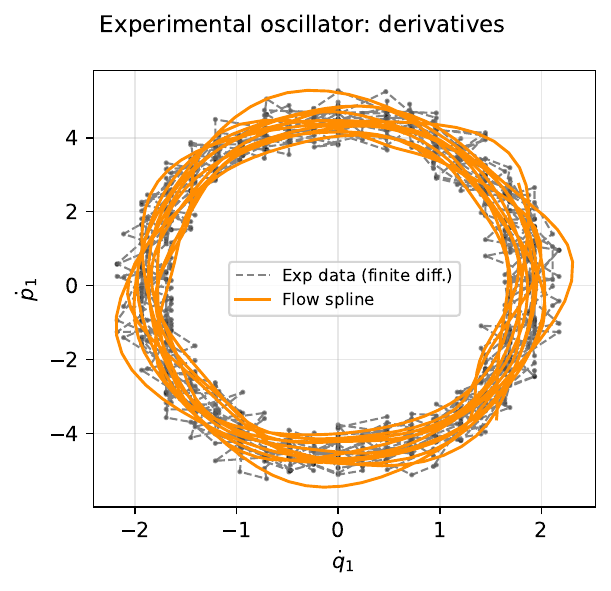}
\end{minipage}
\hfill
\begin{minipage}[t]{0.67\textwidth}
\vspace{0pt}
\centering
\includegraphics[width=\linewidth]
    {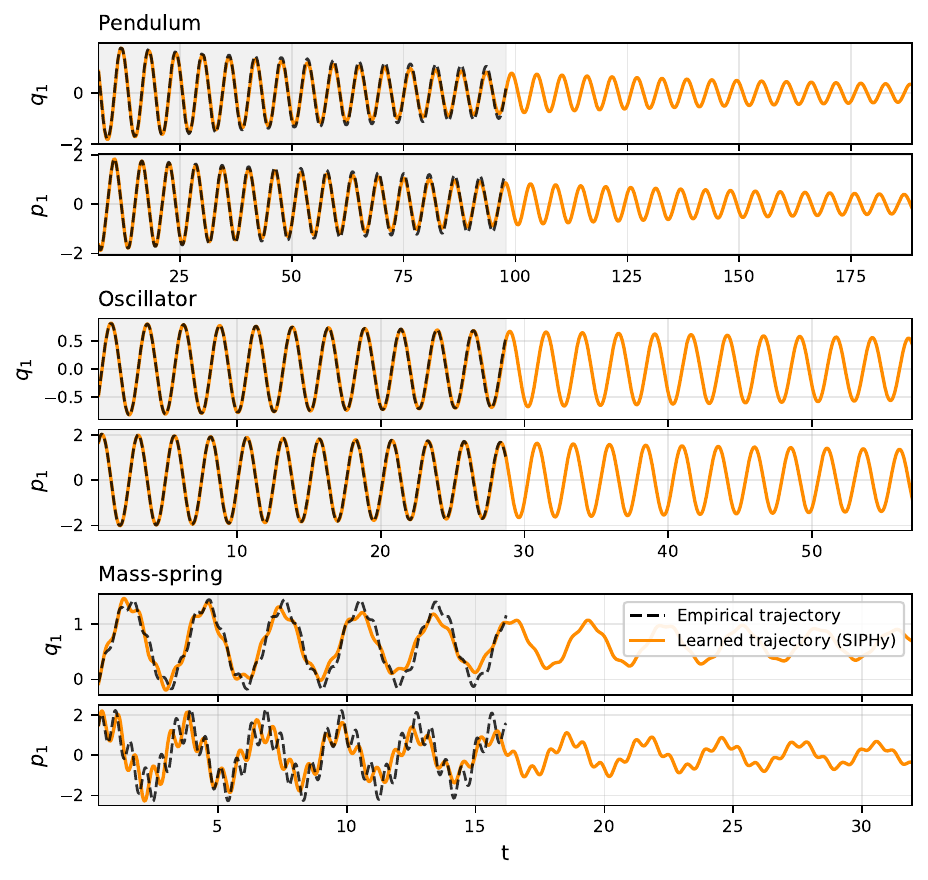}
\end{minipage}

\caption{Left: derivative phase portraits for the pendulum and oscillator, comparing finite-difference estimates (dashed gray) with Hamiltonian flow splines (orange). Right: empirical trajectories (dashed black) and SIPHy rollouts (orange) for all three systems. Only $(q_1,p_1)$ displayed for the mass-spring system. The gray region marks the identification window; the unshaded region shows extrapolation.}
\label{fig:real_rollouts}
\end{figure}

\begin{table}[!t]
  \caption{Port-Hamiltonian systems identified from experimental measurements
\cite{schmidt2009distilling}. Identified 
Hamiltonian $H(\mathbf{y})$, dissipation matrix $\mathbf{R}$, expressed in  scaled
coordinates, together with the rollout error.}
\label{tab:real_data}
\centering
\renewcommand{\arraystretch}{1.25}
\setlength{\tabcolsep}{3pt}
\resizebox{\textwidth}{!}{
\begin{tabular}{llll}
\toprule
System & $H(\mathbf{y})$ & $\mathbf{R}$ & NMSE \\
\midrule
Pendulum & $0.269 y_{0}^{2} + 0.519 y_{1}^{2} - 0.716 \cos{\left(y_{0} \right)}$ & $\left[\begin{matrix}0.017\end{matrix}\right]$ & 0.0121 \\
Oscillator & $3.056 y_{0}^{2} + 0.508 y_{1}^{2}$ & $\left[\begin{matrix}0.014\end{matrix}\right]$ & 0.0027 \\
Mass-spring & $\begin{aligned}&18.31 y_{0}^{2} - 30.437 y_{0} y_{1} - 0.279 y_{0} y_{2} - 42.157 y_{0} + 16.633 y_{1}^{2} + 0.184 y_{1} y_{2} \\ &+ 39.994 y_{1} + 0.61 y_{2}^{2} - 0.172 y_{2} y_{3} + 0.29 y_{2} + 0.566 y_{3}^{2}\end{aligned}$ & $\left[\begin{matrix}0.156 & 0\\0 & 0\end{matrix}\right]$ & 0.2754 \\
\bottomrule
\end{tabular}
}
\end{table}

\subsection{Identification from experimental data}
\label{subsec:real_data}

We finally apply SIPHy to physical measurements from \cite{schmidt2009distilling}, comprising a pendulum, a linear oscillator and a two-mass spring system. Unlike the synthetic experiments, the true Hamiltonian and dissipation are unknown, the data are slightly noisy and the sampling is mildly irregular. We adapt the pipeline in three minor ways, while keeping SIPHy and the flow splines otherwise unchanged. We apply a single scaling factor $\mathbf{y} \mapsto \mathbf{y}/s$ with $s$ being the empirical standard deviation of the observed states. A uniform scalar preserves the canonical $(\mathbf{q},\mathbf{p})$ pairing, so the identified system remains a valid port-Hamiltonian system, expressed in scaled coordinates. We use fixed hyperparameters in the flow splines and SIPHy algorithm and these are reported in \cref{sec:details_num}. 

Physical damping in these systems is small, and it becomes challenging to balance Hamiltonian sparsity against the identification of weak dissipation. We therefore threshold the coefficient vector $\mathbf{w}=[\mathbf{c}\;\mathbf{r}]$ block-wise, using $\tau$ on the Hamiltonian weights $\mathbf{c}$ and a smaller threshold $\tau_R =10^{-4}$ on the dissipation weights $\mathbf{r}$. Additionally, we refine the identified dissipation $\mathbf{R} = \mathrm{diag}(\mathbf{r})$ by minimizing the trajectory-rollout error
against the smoothed flow spline $\mathbf{z}(t)$, that is, by solving
\begin{equation*}
  \min_{\mathbf{R} = \mathrm{diag}(\mathbf{r}),\; \mathbf{r} \ge 0}
    \; e\big(\mathbf{y}_w(t),\, \mathbf{z}(t)\big).
    \label{eq:rollout_refine}
\end{equation*}
Here, $\mathbf{y}_w$ is the trajectory
obtained by integrating the identified system from $\mathbf{z}(0)$. The non-negativity
constraint guarantees an identified model that satisfies the port-Hamiltonian power-balance. Derivative phase portraits as well as empirical and learned trajectories are shown in \cref{fig:real_rollouts}. The identified systems are found in \cref{tab:real_data}.

\subsection{Implementation}

The implementation of SIPHy leverages PySINDy \cite{de2020pysindy} with a port-Hamiltonian feature library. The bound-constrained refit \eqref{eq:refit} is solved with SciPy \cite{virtanen2020scipy} least-squares with bounds applied to the Cholesky factor of the
regularized normal equations $\boldsymbol{\Theta}_S^\top\boldsymbol{\Theta}_S + \alpha \mathbf{I}$, which reduces the problem to $|S|$ rows. The optimization problems in \eqref{eq:optimal_control} and \eqref{eq:optimal_control_k} are solved with Sequential Least Squares Programming (SLSQP) implemented in SciPy \cite{virtanen2020scipy}, where gradients are computed using ODE adjoints from Diffrax \cite{kidger2022neural}. Optuna \cite{akiba2019optunanextgenerationhyperparameteroptimization} is used for hyperparameter optimization, as described in greater detail in \cref{sec:hyper_selection}.

As a benchmark, we apply SINDy \cite{brunton_discovering_2016} using the STLSQ optimizer implemented in PySINDy \cite{de2020pysindy} with the same feature library and setup for hyperparameter search as SIPHy, enabling a direct comparison. Derivatives are estimated by preprocessing the noisy trajectory with \texttt{polydiff} from PyNumDiff \cite{van_breugel_numerical_2020}.

\section{Conclusion}

We have introduced Hamiltonian flow splines and demonstrated how this algorithm allows us to approximate smooth trajectories and derivatives from noisy observations of port-Hamiltonian systems. The SIPHy algorithm is derived under the assumption that the dissipation and control only act on the conjugate momentum $\mathbf{p}$. Thus, we are able to write a subclass of port-Hamiltonian systems as a linear combination of a gradient basis, the derivative of the position $\dot{\mathbf{q}}$ and the control $\mathbf{u}$. The numerical experiments demonstrate how this framework often identifies the correct basis terms, even with high levels of noise and missing data from a single trajectory. This is even true for the mass--spring chain with $d=5$ masses, which includes friction and control. Identifying the port-Hamiltonian structure provides a principled approach for simulating the system in other scenarios, such as zero dissipation or different control inputs.

One limitation of the flow splines is the combinatorial growth of basis terms $N_\Lambda$ with increasing state dimension $n$. The current algorithm uses uniformly distributed knots $T_k$, i.e. points in time where the flow $\mathbf{z}(t)$ switches from one monomial coefficient vector $\mathbf{a}^{(k)}$ to the next $\mathbf{a}^{(k+1)}$. Choosing the location of the knots in an adaptive manner could potentially offer improved accuracy. Moreover, the presented SIPHy algorithm is only tractable for systems with the canonical skew-symmetric structure matrix $\mathbf{J}$. Many port-Hamiltonian systems, such as PH-systems on graphs, have a formulation determined by the connectivity structure of the physical system, which offers a natural venue for generalizing the current approach. 

The main advantage of the port-Hamiltonian representation is that the learned Hamiltonian, dissipation, and input terms can be inspected and modified separately. In the numerical experiments, this decomposition allows the identified models to be evaluated after changing the dissipation or control input. Future work could investigate whether the same structure is useful for downstream tasks such as passivity-based control or structure-preserving model reduction.

\appendix

\section{Hyperparameter selection}\label{sec:hyper_selection}

For the Hamiltonian flow splines introduced in \cref{sec:energy_splines}, we use the restricted polynomial basis with maximum degree $p=6$ for all experiments except the mass-spring chain where $p=4$ is used to reduce the computational complexity. Additionally, we need to select hyperparameters $\theta =\{M, r, \lambda\}$. The smoothing algorithm optimizes the flow over $M\cdot r$ points in time, but only keeps the first $M$ points in the trajectory where $r$ is a window-length multiplier. $\lambda$ is the $l_2$-regularization parameter. We find that, for data with step size $h=0.01$ in a time interval $t\in [0,20]$, the following parameter ranges work well
\begin{align*}
   \left\{ 40 \leq M \leq 60,\quad r=3,  \quad  10^{-5} \leq\lambda \leq 10^{-4} \right\}.
\end{align*}

For SIPHy, we use a monomial feature library of degree $p = 5$ for all systems except the mass-spring chain, where $p = 3$ is used. SIPHy has hyperparameters $\phi = \{\alpha,\tau,\beta,\texttt{TRIG}\}$. $\alpha$ is the $l_2$-regularization parameter, \(\tau\) is a threshold for sparsity, \(\beta \in [0,0.1]\) is the fraction of time points at the beginning and end of the time interval $[0,T]$ held out during STLSQ to avoid boundary effects of the preprocessing algorithm, and $\texttt{TRIG} \in \{\texttt{True}, \texttt{False} \}$ is a boolean variable to select whether to include trigonometric functions in the Hamiltonian basis $\boldsymbol{\Xi}$. We perform hyperparameter optimization over the set
\begin{align*}
 \left\{ 0.1 \leq \alpha \leq 0.4, \;\; 0.05 \leq \tau \leq 0.2, \;\; 0 \leq  \beta \leq 0.1   \right \}.
\end{align*}
We write
\[
\mathbf{w} = \texttt{SIPHy}(\mathbf{z},\dot{\mathbf{z}},\phi)
\]
for the identified port-Hamiltonian system with coefficients \(\mathbf{w}\) from the preprocessed trajectory \(\mathbf{z},\dot{\mathbf{z}}\) with hyperparameters \(\phi\) by solving the optimization problem in \eqref{eq:phs_sindy}. Integrating the identified system $\dot{\mathbf{y}} = \boldsymbol{\Theta}(\mathbf{y}) \mathbf{w}$ gives the trajectory  \(\mathbf{y}_w(t)\) for the given hyperparameter set \(\phi\). For a given choice of SIPHy hyperparameters $\phi$, we evaluate the identified model $\mathbf{w}$ with the residual sum of squares including a $l_0$ (number of non-zero coefficients) complexity penalty with decreasing weight for increasing system dimensionality $n$
\[
S(\mathbf{w}) = \log \left ( \sum_{i=1}^{N} \big\| \mathbf{y}_w(t_i) - \tilde{\mathbf{y}}(t_i) \big\|_2^2 \right ) + \frac{1}{n}\|\mathbf{w}\|_0 
\]
which is inspired by the Akaike information criterion (AIC) applied for sparse regression in \cite{mangan2017model}.

\section{Details of numerical experiments}\label{sec:details_num}

\begin{itemize}

  \item \textbf{Train/validation split.}
    After edge-cropping by $\beta \in [0,0.1]$, the first $80\%$ of the remaining time points are used for STLSQ fitting. The selection metric $S(\mathbf{w})$ is computed over the full trajectory, excluding the cropped initial and last points in time.

  \item \textbf{Optuna budget and warm-up.}
    Each hyperparameter search runs $50$ TPE trials, of which the first $\lfloor 50/4 \rfloor = 12$ are drawn uniformly at random before the surrogate model is engaged.

  \item \textbf{Fixed smoother parameters.}
    The smoother parameters used are found in \cref{tab:smoother_params}.
    
    \begin{table}[h!]
    \caption{Flow spline parameter settings for the three experiments.}
\label{tab:smoother_params}
\renewcommand{\arraystretch}{1.3}
\centering
\begin{tabular}{llll}
\hline
 & $M$ & $r$ & $\lambda$ \\
\hline
Mass-spring & $50$ & $3$ & $10^{-4}$ \\
Dissipative $c=0.1$ & $60$ & $3$ & $10^{-4}$ \\
Conservative $c=0$ & $40$ & $3$ & $10^{-5}$ \\
\hline
\end{tabular}
\end{table}

  \item \textbf{Continuity order.}
    The spline continuity constraint at knot points is set to $\mathcal{C}^2$
    for 2-dimensional systems (Pendulum, Cubic) and $\mathcal{C}^3$ for Hénon--Heiles, FPUT and the mass-spring chain.

  \item \textbf{Initial conditions.}
    Training and test initial conditions for the different systems are found in \cref{tab:inits} and \cref{tab:train-test-settings}.
\begin{table}[ht]
    \caption{Initial values used in the numerical experiments}
    \label{tab:inits}
\centering
\renewcommand{\arraystretch}{1.3}
    \begin{tabular}{lll}
      \toprule
      System & $\mathbf{y}_0$ (train) & $\mathbf{y}_0$ (test) \\
      \midrule
      Pendulum       & $[0.9,\;1.3]$                        & $[-0.1,\;0.2]$ \\
      Cubic          & $[1.2,\;-0.3]$                       & $[0.8,\;-0.7]$ \\
      Hénon--Heiles  & $[-0.2,\;0.15,\;-0.15,\;0.2]$       & $[-0.1,\;0.2,\;-0.3,\;0.25]$ \\
      FPUT           & $[-0.2,\;0.15,\;0.1,\;0.25]$         & $[0.3,\;0.3,\;0.2,\;0.2]$ \\
      \bottomrule
    \end{tabular}
    \end{table}

\end{itemize}

\begin{table}[!h]
\caption{Configurations for the mass--spring chain experiment.}
\label{tab:train-test-settings}
\centering
\begin{tabular}{lll}
\toprule
 &Training & Test\\
\midrule
$u(t)$ &
$\exp(-0.5t)\cos(t)$ &
$0.1\sin(t)$ \\

$\mathbf{R}$ &
$0.15\cdot \mathbf{I}_d$ &
$0\cdot \mathbf{I}_d$ \\

$\mathbf{q}_0$ &
$[0.20,\;0.35,\;0.40,\;0.35,\;0.20]$ &
$[0.17,\;0.17,\;0.0,\;-0.17,\;-0.17]$ \\

$\mathbf{p}_0$ &
$[0.10,\;0.05,\;-0.01,\;-0.06,\;-0.11]$ &
$\mathbf{0}$ \\
\bottomrule
\end{tabular}
\end{table}

\begin{table}[!h]
\caption{Fixed hyperparameters for the experimental-data identification. The
flow-spline parameters $p$ (degree), $M$ (window points kept), $r$
(window-length multiplier), $\lambda$ ($\ell_2$ regularization) and continuity
order, and the SIPHy parameters (Hamiltonian degree, \texttt{TRIG}, $\ell_2$
regularization $\alpha$, Hamiltonian-block threshold $\tau$ and
dissipation/input-block threshold $\tau_R$) follow the notation of
\cref{sec:hyper_selection}.}
\label{tab:real_hyperparams}
\centering
\renewcommand{\arraystretch}{1.3}
\setlength{\tabcolsep}{5pt}
\begin{tabular}{lccccc@{\hskip 1.5em}ccccc}
\toprule
& \multicolumn{5}{c}{\textbf{Flow splines}} & \multicolumn{5}{c}{\textbf{SIPHy}} \\
\cmidrule(lr){2-6}\cmidrule(lr){7-11}
System & $p$ & $M$ & $r$ & $\lambda$ & cont. & deg. & \texttt{TRIG} & $\alpha$ & $\tau$ & $\tau_R$ \\
\midrule
Pendulum    & $5$ & $10$ & $2$ & $10^{-3}$ & $\mathcal C^3$ & $2$ & \checkmark & $0.1$ & $0.05$ & $10^{-4}$ \\
Oscillator  & $5$ & $10$ & $2$ & $10^{-3}$ & $\mathcal C^3$ & $2$ & $\times$   & $0.1$ & $0.05$ & $10^{-4}$ \\
Mass--spring& $5$ & $10$ & $2$ & $10^{-3}$ & $\mathcal C^3$ & $2$ & $\times$   & $0.1$ & $0.05$ & $10^{-4}$ \\
\bottomrule
\end{tabular}
\end{table}

\section{Noise levels and error metric}\label{sec:noise_level}

Let $\mathbf{y}(t_n) = \varphi_{t,f}(\mathbf{y}_0)$ be the flow of a port-Hamiltonian system with vector field $\mathbf{f}$ as in \eqref{eq:port_hamiltonian} from an initial condition $\mathbf{y}_0\in \mathbb{R}^n$. The observed trajectory with noise-to-signal ratio $\gamma \in [0,1]$ is given by  
\[
\tilde{\mathbf{y}}_i = \mathbf{y}(t_i) + \boldsymbol{\delta}_i,\qquad \boldsymbol{\delta}_i \sim \mathcal N\left (0,\frac{\gamma^2\sigma_y^2}{n}\mathbf{I}\right).
\]
where $\sigma_y^2$ is the total variance, or the trace of the empirical covariance matrix, given by
\begin{align}
\sigma_y^2 = \mathrm{tr}(\boldsymbol{\Sigma}_y)  =\frac{1}{N} \sum_{i=1}^N\|\mathbf{y}_i-\overline{\mathbf{y}}\|_2^2, \qquad \overline{\mathbf{y}} &=\frac{1}{N} \sum_{i=1}^N \mathbf{y}_i.
\label{eq:total_variance}
\end{align} This definition of relative noise is invariant to rotations and translations of $\mathbf{y}_i$ and ensures that $\gamma$ is a noise-to-signal ratio in the sense of the square root of the total variance, since we have that
$
\gamma  =  \frac{\sigma_{\delta}}{\sigma_y}
$.

\section{Additional experimental results}\label{sec:add_results}

\begin{table}[!h]
  \caption{Identified port-Hamiltonian systems with varying levels of noise $\gamma$ for the mass-spring system. }
  \label{tab:mass_spring_identified}
\centering
\renewcommand{\arraystretch}{1.25}
\setlength{\tabcolsep}{3pt}
\resizebox{\textwidth}{!}{
\begin{tabular}{ll}
\toprule
$ $ & \textbf{True Mass-Spring-Damper} \\
\midrule
& $H(\mathbf{y}) = 0.4 y_{0}^{2} - 0.4 y_{0} y_{1} + 0.4 y_{1}^{2} - 0.4 y_{1} y_{2} + 0.4 y_{2}^{2} - 0.4 y_{2} y_{3} + 0.4 y_{3}^{2} - 0.4 y_{3} y_{4} + 0.2 y_{4}^{2} + 0.5 y_{5}^{2} + 0.5 y_{6}^{2} + 0.5 y_{7}^{2} + 0.5 y_{8}^{2} + 0.5 y_{9}^{2}$ \\
& $\mathrm{diag}(\mathbf{R})=[0.15, 0.15, 0.15, 0.15, 0.15] \qquad \mathrm{diag}(\mathbf{G})=[0.0, 0.0, 0.0, 0.0, 1.0]$ \\
\midrule
$\gamma \;\;$ & \textbf{Identified port-Hamiltonian system}  \\
\midrule
0.0 & $H(\mathbf{y})=0.3997 y_{0}^{2} - 0.3998 y_{0} y_{1} + 0.3997 y_{1}^{2} - 0.3998 y_{1} y_{2} + 0.3996 y_{2}^{2} - 0.3996 y_{2} y_{3} + 0.3999 y_{3}^{2} - 0.4003 y_{3} y_{4} + 0.2004 y_{4}^{2} + 0.5 y_{5}^{2} + 0.5 y_{6}^{2} + 0.5 y_{7}^{2} + 0.5 y_{8}^{2} + 0.5005 y_{9}^{2}$ \\
& $\mathrm{diag}(\mathbf{R})=[0.1462, 0.1459, 0.1459, 0.1465, 0.1511] \qquad \mathrm{diag}(\mathbf{G})=[0.0, 0.0, 0.0, 0.0, 0.9987]$ \\
\hline
0.1 & $H(\mathbf{y})=0.4033 y_{0}^{2} - 0.4033 y_{0} y_{1} + 0.4016 y_{1}^{2} - 0.4027 y_{1} y_{2} + 0.3982 y_{2}^{2} - 0.3966 y_{2} y_{3} + 0.4019 y_{3}^{2} - 0.4037 y_{3} y_{4} + 0.2014 y_{4}^{2} + 0.4952 y_{5}^{2} + 0.4992 y_{6}^{2} + 0.5017 y_{7}^{2} + 0.4974 y_{8}^{2} + 0.4984 y_{9}^{2}$ \\
& $\mathrm{diag}(\mathbf{R})=[0.1335, 0.1556, 0.1496, 0.1563, 0.1371] \qquad \mathrm{diag}(\mathbf{G})=[0.0, 0.0, 0.0, 0.0, 0.9914]$ \\
\hline
0.2 & $H(\mathbf{y})=0.4045 y_{0}^{2} - 0.4002 y_{0} y_{1} + 0.1217 y_{0} y_{5}^{2} - 0.097 y_{0} y_{7}^{2} + 0.4005 y_{1}^{2} - 0.4018 y_{1} y_{2} + 0.3982 y_{2}^{2} - 0.3947 y_{2} y_{3} + 0.3962 y_{3}^{2} - 0.3957 y_{3} y_{4} + 0.1961 y_{4}^{2} + 0.4842 y_{5}^{2} + 0.4967 y_{6}^{2} + 0.5134 y_{7}^{2}$  \\
&\qquad $+ 0.5025 y_{8}^{2} + 0.489 y_{9}^{2}$\\
& $\mathrm{diag}(\mathbf{R})=[0.1345, 0.1659, 0.1557, 0.1417, 0.1457] \qquad \mathrm{diag}(\mathbf{G})=[0.0, 0.0, 0.0, 0.0, 1.0625]$ \\
\bottomrule
\end{tabular}
}
\end{table}

\begin{table}[!h]
  \label{tab:coefficient_table}
\caption{Identified port-Hamiltonian systems with varying levels of noise $\gamma$ and dissipation rates $c$ where the dissipation matrix is given by $\mathbf{R}=c\cdot \mathbf{I}_d$. }
\centering
\renewcommand{\arraystretch}{1.25}
\setlength{\tabcolsep}{3pt}
\resizebox{\textwidth}{!}{
\begin{tabular}{llll}
\toprule
$\gamma \;\;$ & $c$ & \textbf{Identified Hamiltonian} & $\mathrm{diag}(\mathbf{R})$  \\
  &  & \textbf{Pendulum}: \quad $H(\mathbf{y}) = 0.5 y_{1}^{2} - \cos{\left(y_{0} \right)}$ &   \\
\midrule
 0.0 & 0.1 & $0.5 y_{1}^{2} - 1.0 \cos{\left(y_{0} \right)}$ & $[0.1]$  \\
 0.1 & 0.1 & $0.495 y_{1}^{2} - 1.008 \cos{\left(y_{0} \right)}$ & $[0.098]$  \\
 0.2 & 0.1 & $0.486 y_{1}^{2} - 1.017 \cos{\left(y_{0} \right)}$ & $[0.102]$  \\
 \midrule
 0.0 & 0.0 & $0.5 y_{1}^{2} - 1.0 \cos{\left(y_{0} \right)}$ & $[0.0]$  \\
 0.1 & 0.0 & $0.494 y_{1}^{2} - 1.003 \cos{\left(y_{0} \right)}$ & $[0.0]$  \\
 0.2 & 0.0 & $0.486 y_{1}^{2} - 1.009 \cos{\left(y_{0} \right)}$ & $[0.0]$  \\
\midrule
  &  & \textbf{Cubic oscillator}: \quad $H(\mathbf{y}) = 0.25 y_{0}^{4} + 0.25 y_{1}^{4}$ &   \\
\midrule
 0.0 & 0.1 & $0.25 y_{0}^{4} + 0.25 y_{1}^{4}$ & $[0.1]$  \\
 0.1 & 0.1 & $0.245 y_{0}^{4} + 0.251 y_{1}^{4}$ & $[0.097]$  \\
 0.2 & 0.1 & $0.239 y_{0}^{4} + 0.248 y_{1}^{4}$ & $[0.093]$  \\
 \midrule
 0.0 & 0.0 & $0.25 y_{0}^{4} + 0.25 y_{1}^{4}$ & $[0.0]$  \\
 0.1 & 0.0 & $0.25 y_{0}^{4} + 0.251 y_{1}^{4}$ & $[0.0]$  \\
 0.2 & 0.0 & $0.25 y_{0}^{4} + 0.25 y_{1}^{4}$ & $[0.0]$  \\
\midrule
  &  & \textbf{H\'enon--Heiles}: \quad $H(\mathbf{y}) = y_{0}^{2} y_{1} + 0.5 y_{0}^{2} - 0.333 y_{1}^{3} + 0.5 y_{1}^{2} + 0.5 y_{2}^{2} + 0.5 y_{3}^{2}$ &   \\
\midrule
 0.0 & 0.1 & $1.0 y_{0}^{2} y_{1} + 0.5 y_{0}^{2} - 0.333 y_{1}^{3} + 0.5 y_{1}^{2} + 0.5 y_{2}^{2} + 0.5 y_{3}^{2}$ & $[0.1, 0.1]$  \\
 0.1 & 0.1 & $0.064 y_{0}^{3} + 1.116 y_{0}^{2} y_{1} + 0.494 y_{0}^{2} - 0.346 y_{1}^{3} + 0.501 y_{1}^{2} + 0.504 y_{2}^{2} + 0.5 y_{3}^{2}$ & $[0.09, 0.105]$  \\
 0.2 & 0.1 & $1.217 y_{0}^{2} y_{1} + 0.498 y_{0}^{2} + 0.336 y_{0} y_{1}^{2} - 0.251 y_{1}^{3} + 0.494 y_{1}^{2} + 0.504 y_{2}^{2} + 0.501 y_{3}^{2}$ & $[0.0, 0.114]$  \\
 \midrule
 0.0 & 0.0 & $1.0 y_{0}^{2} y_{1} + 0.5 y_{0}^{2} - 0.333 y_{1}^{3} + 0.5 y_{1}^{2} + 0.5 y_{2}^{2} + 0.5 y_{3}^{2}$ & $[0.0, 0.0]$  \\
 0.1 & 0.0 & $0.974 y_{0}^{2} y_{1} + 0.494 y_{0}^{2} - 0.337 y_{1}^{3} + 0.504 y_{1}^{2} + 0.505 y_{2}^{2} + 0.493 y_{3}^{2}$ & $[0.0, 0.0]$  \\
 0.2 & 0.0 & $0.97 y_{0}^{2} y_{1} + 0.491 y_{0}^{2} - 0.346 y_{1}^{3} + 0.502 y_{1}^{2} + 0.507 y_{2}^{2} + 0.497 y_{3}^{2}$ & $[0.0, 0.0]$  \\
\midrule
  &  & \textbf{FPUT}: \quad $H(\mathbf{y}) = 0.5 y_{0}^{4} + 3.0 y_{0}^{2} y_{1}^{2} + 0.5 y_{1}^{4} + 0.5 y_{1}^{2} + 0.5 y_{2}^{2} + 0.5 y_{3}^{2}$ &   \\
\midrule
 0.0 & 0.1 & $0.5 y_{0}^{4} + 3.0 y_{0}^{2} y_{1}^{2} + 0.5 y_{1}^{4} + 0.5 y_{1}^{2} + 0.5 y_{2}^{2} + 0.5 y_{3}^{2}$ & $[0.1, 0.1]$  \\
 0.1 & 0.1 & $0.45 y_{0}^{4} + 3.439 y_{0}^{2} y_{1}^{2} - 0.094 y_{0}^{2} y_{3}^{2} + 0.12 y_{0} y_{1}^{3} + 0.609 y_{1}^{4} + 0.483 y_{1}^{2} + 0.498 y_{2}^{2} + 0.499 y_{3}^{2}$ & $[0.098, 0.1]$  \\
 0.2 & 0.1 & $4.382 y_{0}^{2} y_{1}^{2} + 0.482 y_{1}^{2} + 0.494 y_{2}^{2} + 0.499 y_{3}^{2}$ & $[0.0, 0.0]$  \\
 \midrule
 0.0 & 0.0 & $0.5 y_{0}^{4} + 3.0 y_{0}^{2} y_{1}^{2} + 0.5 y_{1}^{4} + 0.5 y_{1}^{2} + 0.5 y_{2}^{2} + 0.5 y_{3}^{2}$ & $[0.0, 0.0]$  \\
 0.1 & 0.0 & $0.5 y_{0}^{4} + 2.958 y_{0}^{2} y_{1}^{2} + 0.746 y_{1}^{4} + 0.476 y_{1}^{2} + 0.5 y_{2}^{2} + 0.502 y_{3}^{2}$ & $[0.0, 0.0]$  \\
 0.2 & 0.0 & $3.517 y_{0}^{2} y_{1}^{2} - 0.289 y_{0}^{2} y_{1} y_{3} + 0.893 y_{0}^{2} y_{3}^{2} + 0.521 y_{1}^{2} + 0.497 y_{2}^{2} + 0.454 y_{3}^{2}$ & $[0.0, 0.0]$  \\
\bottomrule
\end{tabular}
}
\end{table}

\begin{figure}[!h]
\centering
\includegraphics[clip, trim=0 1.4cm 0 0, width=1\textwidth]{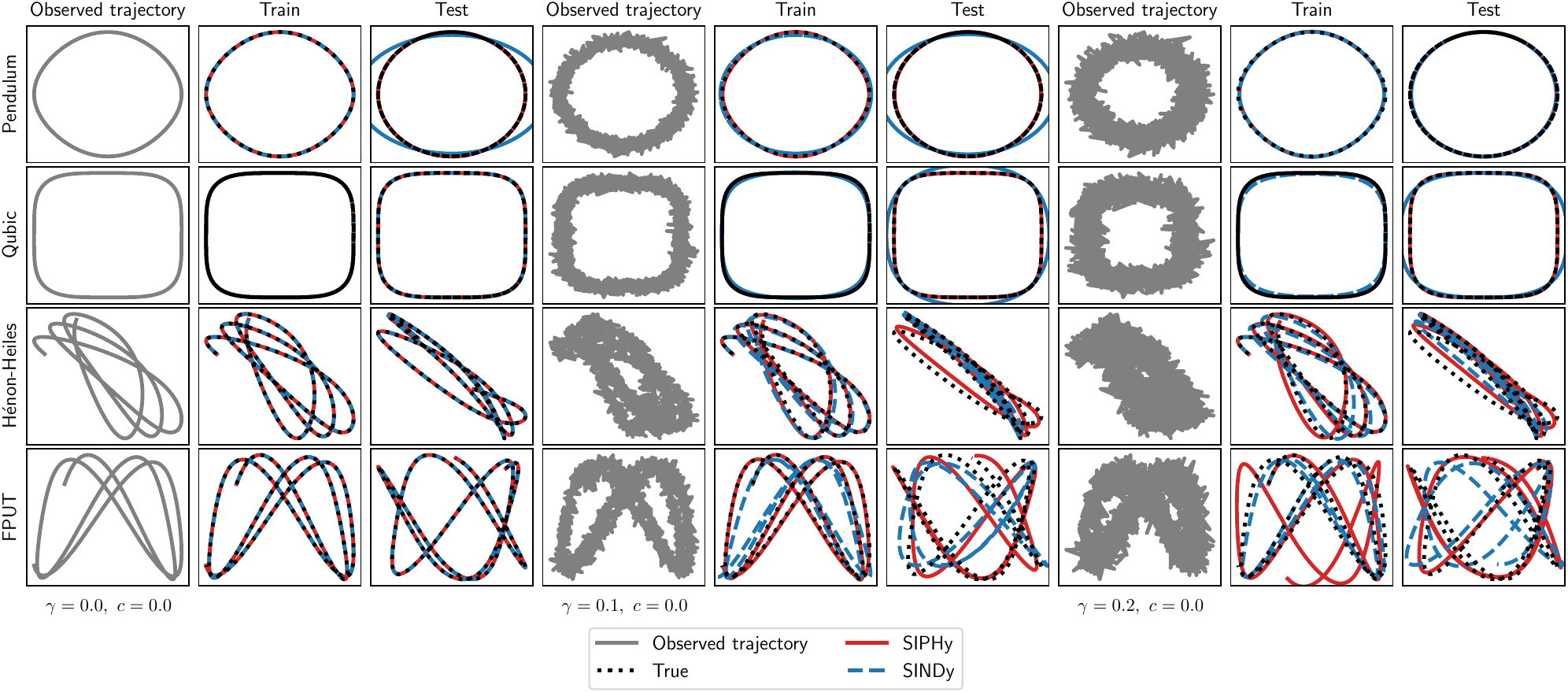}
\includegraphics[clip, trim=0 0 0 0.5cm, width=1\textwidth]{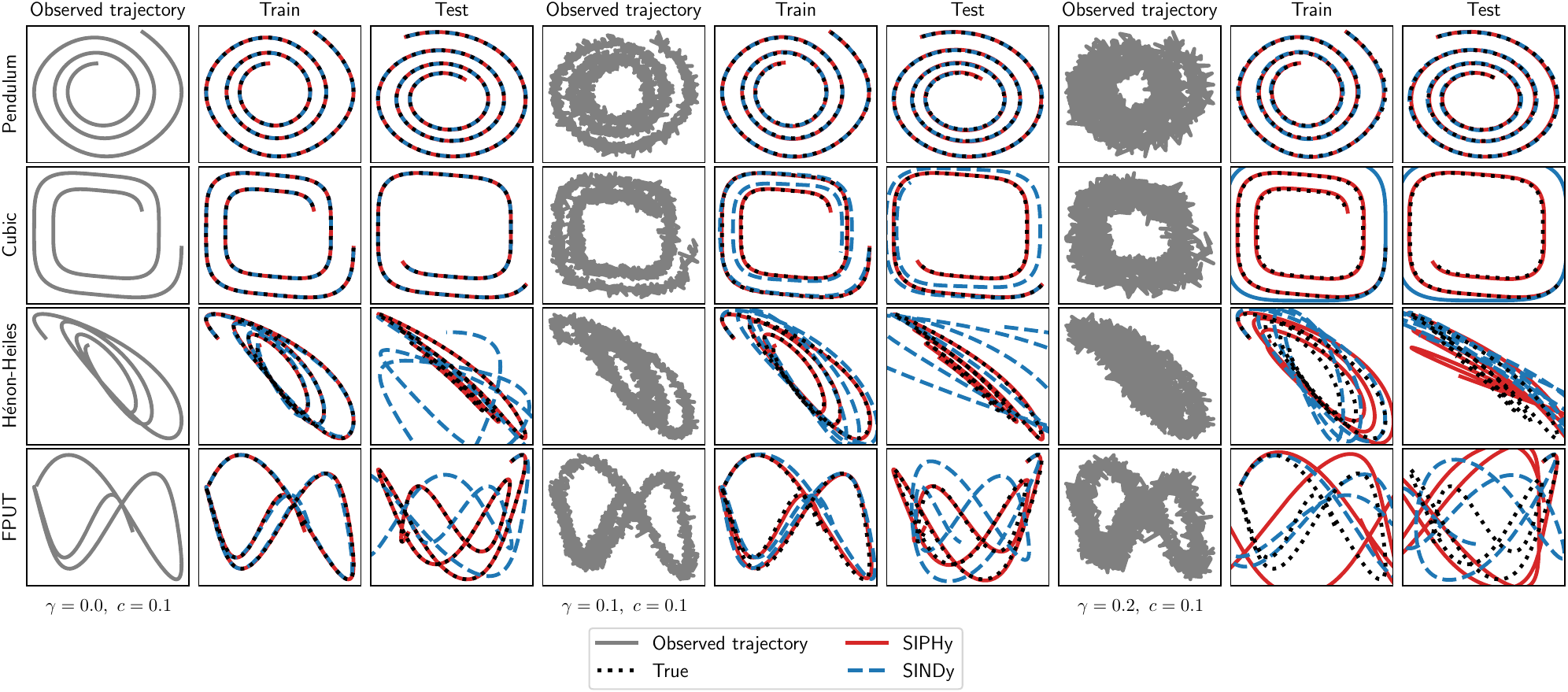}
\caption{Observed and learned trajectories from training and test initial values $\mathbf{y}_0$ for the proposed SIPHy algorithm and SINDy. Increasing noise levels from left to right. Trajectories in the top block are conservative while the lower block the dissipation rate is given by $c=0.1$.}
\label{fig:full_plot}
\end{figure}

\FloatBarrier

\bibliographystyle{abbrvnat}
\bibliography{ref}

\end{document}

%% file: tables/system_table_compare_0.0_0.1_final.tex
\begin{tabular}{l c|cc|cc|}
\toprule
\textbf{System} & \textbf{Noise} & \multicolumn{2}{c}{Coefficient Error} & \multicolumn{2}{c}{Term Match} \\
$c=0$  &  & SIPHy & SINDy & SIPHy & SINDy \\
\midrule
\multirow{3}{*}{\parbox{0.4in}{\centering \includegraphics[height=0.3in]{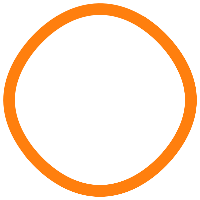} \\ \small Pendulum}} & 0.0 & \textbf{1.1e-15} & 5.8e-01 & $\checkmark$ & $\times$ \\
 & 0.1 & \textbf{6.4e-03} & 6.4e-01 & $\checkmark$ & $\times$ \\
 & 0.2 & \textbf{1.7e-02} & 6.0e-02 & $\checkmark$ & $\checkmark$ \\
\midrule
\multirow{3}{*}{\parbox{0.4in}{\centering \includegraphics[height=0.3in]{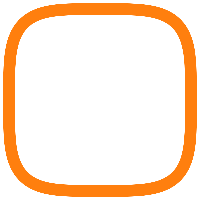} \\ \small Cubic}} & 0.0 & \textbf{2.2e-16} & 1.4e-12 & $\checkmark$ & $\checkmark$ \\
 & 0.1 & \textbf{5.5e-04} & 2.2e+00 & $\checkmark$ & $\times$ \\
 & 0.2 & \textbf{6.2e-04} & 1.4e+00 & $\checkmark$ & $\times$ \\
\midrule
\multirow{3}{*}{\parbox{0.4in}{\centering \includegraphics[height=0.3in]{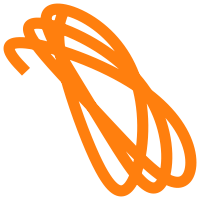} \\ \small Hénon}} & 0.0 & \textbf{1.2e-15} & 2.3e-15 & $\checkmark$ & $\checkmark$ \\
 & 0.1 & \textbf{2.9e-02} & 2.7e-01 & $\checkmark$ & $\checkmark$ \\
 & 0.2 & \textbf{3.4e-02} & 3.2e-01 & $\checkmark$ & $\checkmark$ \\
\midrule
\multirow{3}{*}{\parbox{0.4in}{\centering \includegraphics[height=0.3in]{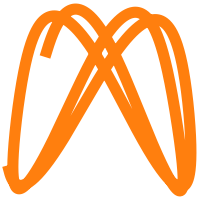} \\ \small FPUT}} & 0.0 & \textbf{1.7e-14} & 1.3e-12 & $\checkmark$ & $\checkmark$ \\
 & 0.1 & \textbf{2.5e-01} & 7.5e+00 & $\checkmark$ & $\times$ \\
 & 0.2 & \textbf{1.3e+00} & 1.7e+01 & $\times$ & $\times$ \\
\bottomrule
\end{tabular}
\begin{tabular}{l c|cc|cc}
\toprule
\textbf{System} & \textbf{Noise} & \multicolumn{2}{c}{Coefficient Error} & \multicolumn{2}{c}{Term Match} \\
 $c=0.1$ &  & SIPHy & SINDy & SIPHy & SINDy \\
\midrule
\multirow{3}{*}{\parbox{0.4in}{\centering \includegraphics[height=0.3in]{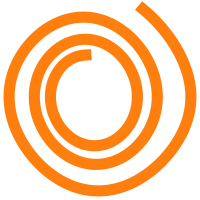} \\ \small Pendulum}} & 0.0 & \textbf{6.8e-16} & 9.9e-01 & $\checkmark$ & $\times$ \\
 & 0.1 & \textbf{5.9e-03} & 9.7e-01 & $\checkmark$ & $\times$ \\
 & 0.2 & 2.4e-01 & \textbf{2.0e-01} & $\times$ & $\times$ \\
\midrule
\multirow{3}{*}{\parbox{0.4in}{\centering \includegraphics[height=0.3in]{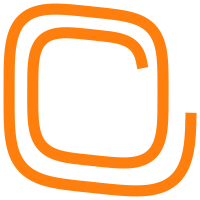} \\ \small Cubic}} & 0.0 & \textbf{2.6e-16} & 8.3e-16 & $\checkmark$ & $\checkmark$ \\
 & 0.1 & \textbf{7.6e-03} & 4.3e-01 & $\checkmark$ & $\times$ \\
 & 0.2 & \textbf{1.7e-02} & 1.5e-01 & $\checkmark$ & $\times$ \\
\midrule
\multirow{3}{*}{\parbox{0.4in}{\centering \includegraphics[height=0.3in]{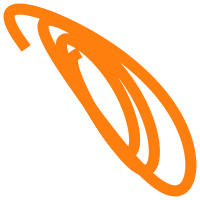} \\ \small Hénon}} & 0.0 & \textbf{3.9e-14} & 2.6e+00 & $\checkmark$ & $\times$ \\
 & 0.1 & \textbf{3.8e-02} & 1.6e+00 & $\checkmark$ & $\times$ \\
 & 0.2 & \textbf{1.1e+00} & 2.6e+00 & $\times$ & $\times$ \\
\midrule
\multirow{3}{*}{\parbox{0.4in}{\centering \includegraphics[height=0.3in]{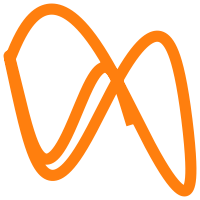} \\ \small FPUT}} & 0.0 & \textbf{1.5e-11} & 2.2e+00 & $\checkmark$ & $\times$ \\
 & 0.1 & 7.9e+00 & \textbf{4.7e+00} & $\times$ & $\times$ \\
 & 0.2 & \textbf{1.6e+00} & 5.0e+00 & $\times$ & $\times$ \\
\bottomrule
\end{tabular}

%% file: mass_spring_damper_fig.tex
\begin{tikzpicture}
\centering
  \def\H{0.8}   
  \def\T{0.3}   
  \def\W{5.0}   
  \def\D{0.25}  
  \def\h{0.6}   
  \def\w{0.8}   

  \def\xA{1.0}
  \def\xB{2.5}
  \def\xC{4.0}
  \def\xD{6.2}

  \draw[spring, segment length=6.0]
    (0,\h/2) -- (\xA,\h/2)
     node[midway, above=0.2] {$k_1$};

  \draw[spring, segment length=5.2]
    (\xA+\w,\h/2) -- (\xB,\h/2)
     node[midway, above=0.2] {$k_2$};

  \draw[spring, segment length=5.2]
    (\xB+\w,\h/2) -- (\xC,\h/2)
     node[midway, above=0.2] {$k_3$};



  \draw[ground]
    (0,0) |-++ (-\T,\H) |-++ (\T+\W,-\H-\D)
    |-++ (-\W,\D) -- cycle;

  \draw (0,\H) -- (0,0) -- (\W,0);

  \draw[mass] (\xA,0) rectangle++ (\w,\h) node[midway] {$m_1$};
  \draw[mass] (\xB,0) rectangle++ (\w,\h) node[midway] {$m_2$};
  \draw[mass] (\xC,0) rectangle++ (\w,\h) node[midway] {$m_3$};

  \draw[force] (\xC+\w,0.5*\h) --++ (0.8,0)
   node[midway,above=0] {$u(t)$};

\end{tikzpicture}